\documentclass[12pt]{article}

% PACKAGES
\usepackage{amsmath}  
\usepackage{latexsym}
\usepackage{amssymb}
\usepackage{ifthen}
\usepackage{tikz}
\usepackage{amsthm}
\usepackage{yfonts}
\usepackage{soul}

\usepackage{hyperref}

% PAGE FORMAT
\oddsidemargin 0.0in
\evensidemargin 0.0in
\textwidth 6.5in
\textheight 8.5in
\topmargin -0.25in

%For commenting out
\newcommand{\junk}[1]{}

% General Commands from past papers
\newcommand{\formal}[1]{\ensuremath{\textsf{#1}}}
\newcommand{\dword}[1]{\textit{\bf #1}}

% Specific Commands to this paper

 % value pair

\newcommand{\burn}{\ensuremath{\formal{b}}}
\newcommand{\cover}[1]{\ensuremath{\langle #1 \rangle}}
\newcommand{\cocoon}[1]{\ensuremath{\langle \langle #1 \rangle \rangle}}

\newcommand{\subtree}[1]{\ensuremath{T[#1]}}
\newcommand{\bigT}{\ensuremath{\widetilde{T}}}  %\newcommand{\bigT}{\ensuremath{\mathcal{T}}}
\newcommand{\rad}{\ensuremath{\formal{rad}}}
\newcommand{\nv}{\ensuremath{n}} % for number of nodes at a root
\newcommand{\h}{\ensuremath{h}} % for height at a root
\newcommand{\te}{\ell}
\newcommand{\Q}{q}

% THEOREMS

\newtheorem{defn}{Definition}[section]
\newtheorem{thm}[defn]{Theorem}
\newtheorem{prop}[defn]{Proposition}

\newtheorem{lemma}[defn]{Lemma}

\newtheorem{observation}[defn]{Observation}

\newtheorem{remark}[defn]{Remark}

\newtheorem{quest}[defn]{Question}

% ENVIRONMENTS
%\newenvironment{proof}{\noindent \textbf{Proof}  \begin{quote}}{ $\blacksquare$   \end{quote}}

\title {Graph Burning On Large $p$-Caterpillars}
\author{Danielle Cox \and M.E. Messinger \and Kerry Ojakian}

\begin{document}
%\linenumbers
\maketitle

\begin{abstract} 

Graph burning models the spread of information or contagion in a graph.  At each time step, two events occur: neighbours of already burned vertices become burned, and a new vertex is chosen to be burned.  The big conjecture is known as the {\it burning number conjecture}: for any connected graph on $n$ vertices, all $n$ vertices can be burned after at most $\lceil \sqrt{n}\ \rceil$ time steps.  It is well-known that to prove the conjecture, it suffices to prove it for trees.  We prove the conjecture for sufficiently large $p$-caterpillars. 

\end{abstract}

\section{Introduction}

The process called \emph{graph burning} models the spread of information or contagion through a graph. Graph burning was introduced by Bonato, Janssen, and Roshanbin~\cite{bon2014,bon2016}, who describe a process in which a fire spreads through a simple undirected graph. There are two possible states for a vertex: burned or unburned; and initially, each vertex is unburned.   During the first round, a vertex is chosen to be burned.  During each subsequent round, two events occur: every unburned neighbour of a burned vertex becomes burned; and an unburned vertex is selected to be burned, provided such an unburned vertex exists.  The vertex selected at each round is called a \dword{source}. The process continues until every vertex in the graph is burned. The central question asks how quickly the fire propagates through the graph. The 
\dword{burning number} of a graph $G$, denoted $\burn(G)$, is the minimum number of rounds needed to burn every vertex of $G$.

The burning number has been studied for a variety of classes of graphs, including random graphs, theta graphs, generalized Petersen graphs, path-forests, hypercubes, graph products, and some trees.  See the survey by Bonato~\cite{Bonato} for more information about the burning number for various classes of graphs.  Although the burning number can be determined in polynomial time for cographs and split graphs~\cite{Kare}, the associated decision problem is {\bf NP}-complete in general.  It remains {\bf NP}-complete for trees with maximum degree 3, spider graphs, caterpillars of maximum degree 3, interval graphs, connected proper interval graphs, connected cubic graphs, permutation graphs, and disk graphs; see~\cite{Dalu,Bessy,Gupta,cat2019,Liu}.  For a connected graph on $n$ vertices, a central question is whether the burning number is bounded by $\lceil \sqrt{n}\ \rceil$:

\bigskip

\noindent {\bf Burning number conjecture (BNC)}.~\cite{bon2016} For a connected graph $G$ on $n$ vertices, $\burn(G) \leq \lceil \sqrt{n}\ \rceil$.\\

In~\cite{bon2016}, it was observed that if $H$ is a spanning subgraph of connected graph $G$, then $\burn(G) \leq \burn(H)$.  Thus, to prove the conjecture in general, it suffices to prove the conjecture for trees.  See~\cite{multi2022,Bonato,Norin} for asymptotic bounds on the burning number.  The conjecture has been proven to be true for paths~\cite{bon2016}, spiders~\cite{BonatoLidbetter,Das}, trees whose non-leaf vertices have degree at least 4~\cite{Omar}, trees whose non-leaf vertices have degree at least 3 (on at least $81$ vertices)~\cite{Omar}, $1$-caterpillars~\cite{cat2019,Liu}, and $2$-caterpillars~\cite{cat2019}.  For $p \geq 1$, a \dword{$p$-caterpillar} is a tree that contains a maximal path $P$ (called the \dword{spine}) such that every vertex is distance at most $p$ to $P$. 
The BNC is unresolved for $p$-caterpillars with $p \geq 3$.  
We state our main result next
(the proof appears in Section~\ref{sec:pcats}).

\begin{thm}\label{thm:catp} The BNC holds for any $p$-caterpillar on at least $16(4p^3 + 2p^2 + 4p)^2p^2$ vertices.
\end{thm}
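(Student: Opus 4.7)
The plan is to set $T = \lceil \sqrt{n}\,\rceil$ and build a burning sequence of length $T$ that ignites every vertex. Write $Q = 4p^3 + 2p^2 + 4p$, so the hypothesis $n \ge 16Q^2p^2$ is equivalent to $T \ge 4Qp$, which will provide the polynomial slack needed at the end. Let $P = p_1 p_2 \cdots p_\ell$ be the spine and, for each spine vertex $u$, let $S_u$ denote the subtree consisting of $u$ together with the off-spine vertices reachable from $u$ without using another spine vertex; by definition, $S_u$ has radius at most $p$ around $u$. The fact I would exploit throughout is that a source placed at spine vertex $w$ and burned at time $i$ covers every spine vertex at spine-distance $\le T-i$ from $w$, and moreover fully burns $S_u$ for each such spine vertex $u$ provided $d_P(w,u) \le T-i-p$. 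The identity $\sum_{i=1}^{T-p}\bigl(2(T-i-p)+1\bigr) = (T-p)^2$ then says that $T-p$ sources scheduled at times $1,\dots,T-p$ can tile, with this ``effective'' reach, a spine of total length at most $(T-p)^2$, burning all attached subtrees in the process.

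Case 1 ($\ell \le (T-p)^2$). The effective-reach tiling already burns every vertex, so the remaining $p$ sources are spent harmlessly and the argument is complete.

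Case 2 ($\ell > (T-p)^2$). Since $n \le T^2$, the off-spine count is bounded by $n - \ell < T^2 - (T-p)^2 < 2pT$. I would then classify spine vertices as \emph{heavy} ($|S_u| > Q$) or \emph{light} ($|S_u| \le Q$). Because each heavy subtree contributes more than $Q$ off-spine vertices, the number of heavy spine vertices is at most $2pT/Q \le T/(2p^2)$, using $Q \ge 4p^3$. The strategy then has two parts: (i) for each heavy $u$, dedicate one source placed at $u$ and burned at some time $\le T-p$, which suffices to burn all of $S_u$; (ii) use the remaining $T - O(T/p^2)$ sources on the spine, placed with full effective reach at early times, to cover the rest of $P$ together with every light subtree.

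The main obstacle is the scheduling in Case 2: each heavy source is pinned both in time (by $T-p$) and in location (at the heavy vertex), so its effective-reach interval is not freely movable, and clusters of heavy vertices waste coverage through overlap. I would handle this by assigning each heavy source the latest possible time $\le T-p$ (shrinking its interval and minimizing overlap), and then tiling the uncovered portions of the spine with the remaining sources scheduled at the earliest times. The crucial arithmetic check is that the residual spine length plus the depth-$p$ buffer needed to burn every light subtree fits inside the budget contributed by the free sources; the specific polynomial form $Q = 4p^3 + 2p^2 + 4p$ together with the slack $T \ge 4Qp$ is what should make this inequality go through.
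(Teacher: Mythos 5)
There is a genuine gap, and it sits exactly where you flag it: the Case 2 scheduling is never carried out, and under the literal reading it cannot be. The hardest instances for this theorem (the paper's ``hardest case'') are those in which every subtree is a bare pendant path of height at most $p$; then every spine vertex is \emph{light} for any threshold $Q\ge p+1$, your part (i) is vacuous, and the plan collapses to tiling the spine with sources of effective reach $T-i-p$. But Case 2 is defined by $\ell>(T-p)^2$, while that tiling covers at most $(T-p)^2$ spine vertices, and the last $p$ sources can add at most $p^2$ more (with no guarantee about subtrees hanging there). Concretely, take $n=T^2$, spine length $\ell=T^2-pT$, and $T$ pendant paths of length $p$ spread along the spine: this instance lies in Case 2 (fewer than $2pT$ off-spine vertices), has no heavy vertices, and your coverage budget falls short by roughly $pT-p^2=\Theta(p\sqrt{n})$ spine vertices --- a deficit that neither the $O(T/p^2)$ sources saved by the heavy/light dichotomy nor the polynomial slack hidden in $Q$ and in $T\ge 4Qp$ can absorb, since those are $O(\sqrt{n}/p)$ and $O(\mathrm{poly}(p))$ respectively.

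Your closing sentence points at the right refinement --- charge the depth-$p$ buffer only where subtrees actually sit --- but that deferred ``arithmetic check'' is the entire difficulty, and it is exactly what the paper's machinery exists to control. The books are tight: the total ball budget is $T^2$, the demand is $\ell$ plus a margin of $h_u\le p$ at each of up to $\Theta(p\sqrt{n})$ roots, and a naive local repair when a root falls too close to the right end of its ball costs up to $2h_u$ of coverage each time, which does not fit in the slack. The paper escapes this by tracking an ``excess'': left-bad roots are fixed by shifts at no cost (Left Shift Lemma), while right-bad roots are prepaid by \emph{jumping} a large ball from the far right, which tree-covers a whole batch of roots at once and strictly increases the excess (Jump Lemma), with the ball-counting Lemma~\ref{lem_ball_calc} --- and hence the bound $16(4p^3+2p^2+4p)^2p^2$ --- guaranteeing enough non-tiny balls on the right to fund these jumps. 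Your proposal has no analogue of this redeployment step, so the inequality you are counting on does not go through; the heavy/light split by subtree \emph{size} is in any case aimed at the wrong parameter, since the paper shows (Proposition~\ref{theprop}) that dense subtrees are the easy case and sparse, path-like subtrees are the hard one.
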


\section{Proof Outline}

We describe an alternate approach to graph burning (developed in \cite{bon2016}), and provide some intuition on how we use this approach to prove our main theorem.

\begin{defn}
Let $G$ be a graph.  A {\bf ball of radius} {\boldmath{$r$}}  in $G$ is a subset of vertices of $V(G)$ containing a vertex $x \in V(G)$ and all vertices within distance $r$ of $x$, for some integer $r \geq 0$.  Such a ball is said to be {\bf centered at} {\boldmath{$x$}}.  If $B$ is a ball, we optionally superscript, writing $B^{[r]}$ to indicate that the ball has radius $r$, or write that $\rad(B) = r$.
\end{defn}
In graph-theoretic literature, the set of vertices in a ball of radius $r$ centered at vertex $x$  is sometimes referred to the $r^{th}$ 
\emph{closed neighbourhood} of $x$.  

\begin{defn}
Let $G$ be a graph.  A \dword{cover} of $G$ is a collection of balls such that every vertex is in at least one ball.  A cover is \dword{distinct} if no two balls have the same radius.
\end{defn}

\noindent \emph{Henceforth, all covers are assumed to be distinct.  Furthermore, all graphs considered in this paper are assumed to be $p$-caterpillars.} We will also always be working with \dword{spine covers} of $p$-caterpillars: covers where each ball is centered on the spine.

An example of a cover of the tree $T^*$  (drawn at the right in Figure~\ref{fig:bad_idea}) is the following: a radius $0$ ball centered at $v_1$, a 
radius $1$ ball centered at $v_8$, and a radius 2 ball centered at $v_4$.
This example corresponds to the graph burning process in which 3 sources
are chosen: first $v_4$, then $v_8$, then $v_1$; the balls correspond to the 
vertices burned by the respective sources.  So we can see that $\burn(T^*) \leq 3$.  
In general for a graph $G$, $\burn(G) = r+1$, where $r$ is the smallest
integer such that
$r$ is the radius of the largest ball in a cover of $G$.

We refer to a vertex on the spine $P$ with degree greater than 2 as a \dword{root}.  
In the subgraph induced by the deletion of the edges of $P$, each non-isolated spine vertex (i.e. a root) can be viewed as the root to a subtree; we denote such a subtree with root $x$ by \subtree{x}.  Note that each root has a subtree of height at most $p$. For example, the left tree in Figure~\ref{fig:bad_idea} is a 1-caterpillar whose spine can be taken to be the path from $v_1$ to $v_8$, and thus has roots $v_3, v_4, v_5$, and $v_6$, where each is the root of a height 1 subtree.

\begin{figure}[htbp] 
\[ \includegraphics[width=0.9\textwidth]{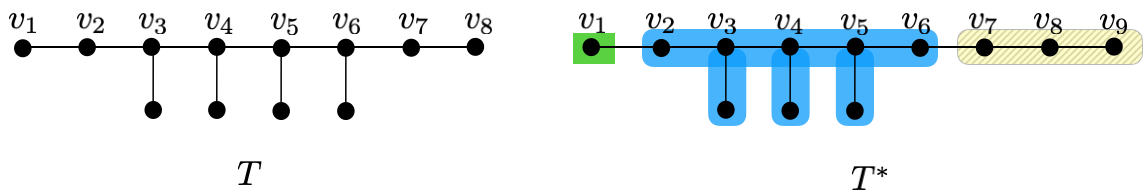} \] 

\caption{Caterpillars $T$ and $T^*$ for which $\burn(T)>\burn(T^*)$.}

\label{fig:bad_idea}
\end{figure}

\begin{figure}[h]
\[ \includegraphics[width=0.95\textwidth]{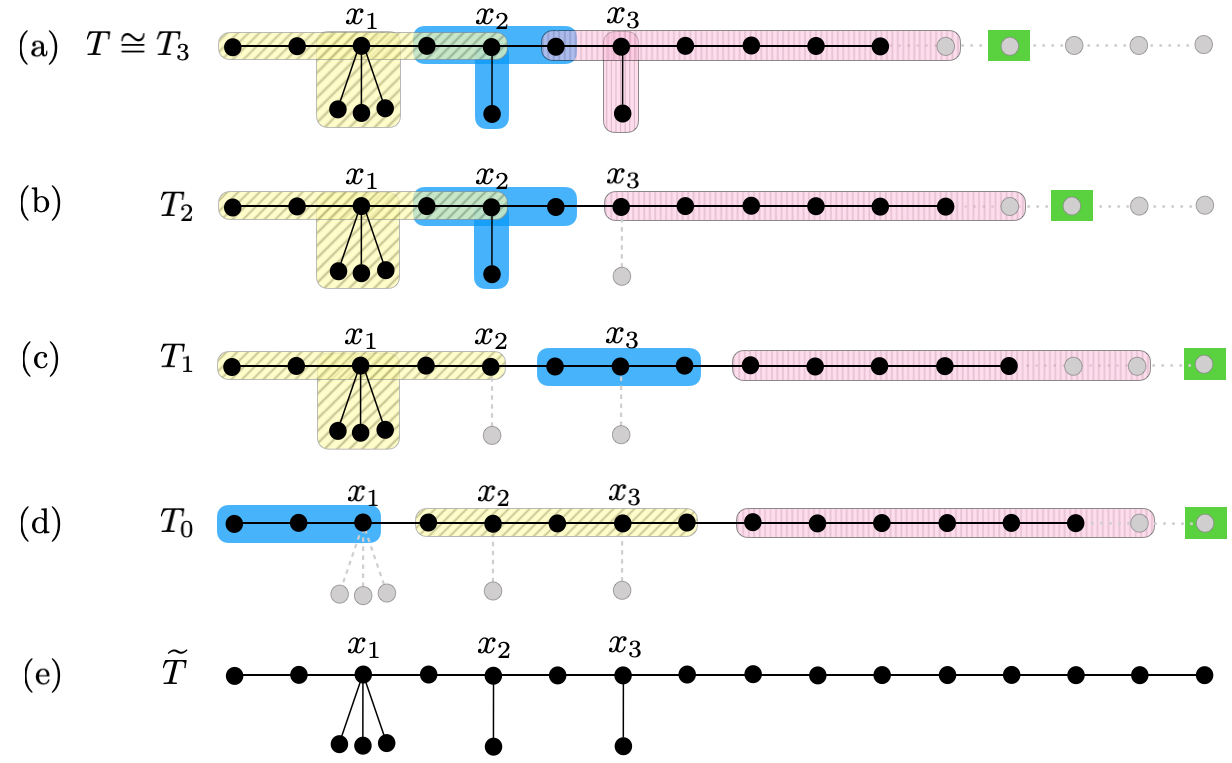} \]
\caption{A $1$-caterpillar $T$ and the sequence of $1$-caterpillars that form its cocoon.}

\label{fig:bigEx}
\end{figure}

The fundamental idea of our proof approach is to transform a cover of the path $P_n$ (where we know the BNC holds), into a cover of a $p$-caterpillar on $n$ vertices, thus implying the BNC for the $p$-caterpillar. We will use Figure~\ref{fig:bigEx} to illustrate this procedure. In parts (b-d) the grey vertices and dotted edges are not part of the $T_i$, $i=0,1,2,3$, but will be referenced in a later discussion.
Consider a $p$-caterpillar $T$ with $n$ vertices and $k$ roots;
for example part (a) of Figure~\ref{fig:bigEx} which shows a $1$-caterpillar with 16 vertices and 3 roots.
Our proof will begin with an $n$ vertex path and a cover with  
$\lceil \sqrt{n}\ \rceil$ balls (for example part (d)  of Figure~\ref{fig:bigEx} including the grey spine vertices displays $P_{16}$ and a cover with 4 balls). 
In our transformation we move vertices and edges from the right side of the path to the leftmost root. For example, starting with $T_0$ in part (d) of Figure~\ref{fig:bigEx}, to obtain $T_1$ in part (c) we remove one vertex from the right end of the spine and append three vertices to $x_1$.

 We note that the vertices in the subtree at this root may not be covered by the original cover on the path. For example, the spine cover in part (d) would not work for the graph in part (c), as the vertices appended to $x_1$ would not be covered. We will show, that under the right assumptions, we can rearrange the balls (neither adding nor removing balls) so that the subtree at the leftmost root is covered (for example, the arrangement of balls shown in part (c) results in the subtree rooted at $x_1$ being covered).  We continue the process, moving vertices and edges from the right side to place them at the next root, then rearranging the balls, so that the vertices at the root are covered (for example, in parts (b) and (a) of Figure~\ref{fig:bigEx} we can see this process continued in order to cover $T$). 

The outlined approach depends on having the correct conditions. Consider the example of Figure~\ref{fig:bad_idea}, where the right tree $T^*$ is transformed into the left tree $T$, in line with the above outlined process.
As pictured, $T^*$ has a  cover with 3 balls.
However, 3 balls do not suffice to cover $T$.
To see this, observe that if the ball of radius $2$ is not centered at $v_4$ or $v_5$, it will cover at most 7 vertices, leaving at least 5 vertices to be covered by balls of radius $0$ and $1$, which is impossible.  Thus, without loss of generality, the ball of radius $2$ must be centered at $v_4$.  
 However, centering it at $v_4$ leaves $v_1,v_7,v_8$, and the leaf adjacent to $v_6$ to be covered by the balls of radius $0$ and $1$, which is impossible.
The example shows that the process of transforming a cover of the path into a cover of a $p$-caterpillar by rearranging balls, may, in general, not work. If, in the middle of this process, we wanted to convert the cover of $T^*$ into a cover of $T$ (from Figure~\ref{fig:bad_idea}), this would not be possible using the same balls.  In the proof of Theorem~\ref{thm:catp}, we will make particular demands on the cover in order to successfully complete the process, using combinations  of \emph{shift} and \emph{jump} operations, as described in Section~\ref{secOps}.

\section{Covering Caterpillars}

As described in the previous section, our approach will involve successively transforming an existing cover of a $p$-caterpillar into a new cover of a slightly modified $p$-caterpillar, getting closer to our target $p$-caterpillar with each step.  We next discuss stages of caterpillars, covers, and then operations on covers.

\subsection{Caterpillar Cocoon}

When we refer to a $p$-caterpillar $T$, we standardize the notation to say it has spine $(v_1, \ldots, v_t)$ with roots
$x_1, \ldots, x_k$; this means that the spine consists of the path $(v_1, \ldots, v_t)$ and there are some 
$1< i_1 < \cdots < i_k < t$, such that  $x_j = v_{i_j}$ are the roots.  Let $\h_i$ be the height of the subtree at root $x_i$; observe $\h_i \leq p$.  Let $\nv_i$ be the number of vertices in the subtree rooted at $x_i$ (not counting the root itself): for example in Figure~\ref{fig:bigEx} (a), $\nv_1=3$, $\nv_2=\nv_3=1$ and $\h_1=\h_2=\h_3=1$.

We consider the indices increasing from $v_1$ on the left to $v_t$ on the right.
We use the next definition to encode the step-by-step process of transforming a path into a $p$-caterpillar.

\begin{defn}\label{defn:cocoon}
Suppose $T$ is a $p$-caterpillar with $k$ roots, $x_1, \ldots, x_k$, ordered from left to right.  The  \dword{cocoon} of $T$ is the sequence of $p$-caterpillars
\cocoon{T_0, T_1, \ldots, T_k}, where $T_k = T$, and $T_{i-1}$ is constructed from $T_i$ as follows: \begin{quote}
Consider $x_i$, i.e. the rightmost root of $T_i$. Remove all $\nv_i$ vertices in the subtree at $x_i$,
and extend $T_i$ at its rightmost spine vertex, by a path with
$\h_i$ vertices.
\end{quote}

\end{defn}

\noindent
Notice that $T_0$ is simply a path, which is in fact a $p$-caterpillar.  
When we have a $p$-caterpillar cocoon \cocoon{T_0, \ldots, T_k} of $T$, we will often want to refer to a graph \bigT \ (we call its \dword{wrapper}) which we define to be the tree $T$ with a path of $\sum_{i = 1}^k \nv_i$ vertices added to the rightmost spine vertex of $T$.  Then each $T_i$ is a subgraph of \bigT. 
To illustrate the last definition, observe that in Figure~\ref{fig:bigEx} 
 \cocoon{T_0, T_1, T_2, T_3} is the cocoon of $T$ and \bigT \ is its wrapper.
Recall, the dotted edges and grey vertices in (a)-(d)  do not exist in subgraphs $T_0,T_1,T_2,T_3$ but are used to illustrate the difference in spine vertices between the wrapper \bigT\ and the caterpillars in the cocoon.

\begin{defn}\label{defn:Ti} Consider a cocoon \cocoon{T_0, \ldots, T_k} of $p$-caterpillar $T$, and let \bigT\ be its wrapper.
A {\boldmath{$T_i$}}{\bf -cover in }{\boldmath{$\widetilde{T}$}} {\bf with excess} {\boldmath{$\varepsilon_i$}} is a spine cover of subgraph $T_i$ in \bigT\ such that a set of $\varepsilon_i$ vertices on the spine of $\bigT\ - T_i$ are covered.
We require that if a spine vertex of \bigT \  is covered, then any spine vertex to its left is also covered. 
\end{defn}

\noindent
Figure~\ref{fig:bigEx} illustrates the definition, where we see in part (d) a $T_0$-cover in \bigT with excess  $\varepsilon_0 = 2$, as the height of the subtree at $x_1$ is $1$ but the order is $3$.
Part (c) displays a $T_1$-cover in \bigT \ with excess $\varepsilon_1 = 3$, part (b) displays a 
$T_2$-cover in \bigT \ with excess $\varepsilon_2 = 2$, and part (a) displays
a $T_3$-cover in \bigT \ with excess $\varepsilon_3 = 2$.

Notice that when constructing $T_{i-1}$ from $T_i$ in the cocoon of $T$,
the number of vertices added to the right of the path is the height (i.e. $\h_i$) of the subtree, \emph{not} the number of vertices (i.e. $\nv_i$) in the subtree.  The rationale behind this distinction hinges on the fact that when using a spine cover, all that matters is the height of the subtree, so that extra vertices of a subtree that get covered (i.e. $\nv_i - \h_i$) make the $p$-caterpillar easier to cover, so we refer to them as excess.  
In later arguments, since we only consider spine covers, the hardest case is when the subtree rooted at each $x_i$ is just a path, so $\h_i = \nv_i$.  So, in this hardest case, 
when transitioning from $T_i$ to $T_{i-1}$, we are simply moving all the vertices of $T_i$ to the end of its spine.  So our definition of excess is set up to count this hardest case as having zero excess.

Recall that $k$ is the number of roots on the spine of $T$. We will typically start with the entire spine of \bigT \ covered, so for the initial excess  we count the number of spine vertices on \bigT \ that are not on $T_0$, i.e.
$$\varepsilon_0  = \sum_{i = 1}^k (\nv_i - \h_i) \ \geq 0.$$
Generally, we assume $\varepsilon_0 = 0$ because we consider the hardest case of  $\nv_i = \h_i$ (i.e. each subtree is merely a path),
though in one situation  we consider positive initial excess.
The goal of our proofs will be to arrive at a $T_k$-cover in \bigT\ with excess $\varepsilon_k \geq 0$,
since this immediately yields a cover of $T$.

%\begin{figure}[h]
%\[ \includegraphics[width=0.95\textwidth]{BigEx} \]
%\caption{A $1$-caterpillar $T$ and the sequence of %$1$-caterpillars that form its cocoon.}

%\label{fig:bigEx}
%\end{figure}

\subsection{Basics of Covers}

A root $x$ of a $p$-caterpillar is \dword{tree-covered} by ball $B$
if $B$ is centered on the spine and all vertices of the subtree rooted at $x$ are in ball $B$. So if $B$ is of radius $r$ and centered
at $v_i$, then in addition to including some non-spine vertices, $B$ includes the $2r + 1$ spine vertices:
$v_{i-r}, \ldots, v_i, \ldots, v_{i+r}$; we refer to $v_{i-r}$ as the \dword{left endpoint}
of $B$ and $v_{i+r}$ as the \dword{right endpoint} of $B$. 
Consider a ball centered at $v_i$: if the ball has radius at least $p$ it is guaranteed to tree-cover $v_i$, otherwise it may not. Since this distinction will be highly relevant to our subsequent proofs, we say that a ball is \dword{tiny} if its radius is less than $p$ and \dword{non-tiny} if its radius is at least $p$. The \dword{tiny ball region} is the set of spine vertices that are covered by tiny balls.
If a spine-centered non-tiny ball $B$ contains $x_j$ but does not tree-cover it, there
are 2 ways this can happen:
%\begin{enumerate}

%\item   
%If $x_j$ is left of the center of $B$, we say $x_j$   
%is \dword{left-bad} for $B$; i.e. $x_j$ is less than distance $n_j$ from $v_{i-r}$.

%\item
%If $x_j$ is right of the center of $B$, we say $x_j$  
%is \dword{right-bad} for $B$; i.e. $x_j$ is less than distance $n_j$ from $v_{i+r}$.

%\end{enumerate}

\begin{enumerate}
\item If $x_j$ is left of the center of $B$ and $x_j$ is less than distance $\h_j$ from $v_{i-r}$, then we say $x_j$ is \dword{left-bad} in $B$.

\item If $x_j$ is right of the center of $B$ and $x_j$ is less than distance $\h_j$ from $v_{i+r}$ then we say $x_j$ is \dword{right-bad} in $B$.
\end{enumerate}

In the $T_0$-cover in \bigT\ in Figure~\ref{fig:bigEx} (d), vertex $x_1$ is right-bad.  In the $T_2$-cover in \bigT\ in Figure~\ref{fig:bigEx} (b), vertex $x_3$ is left-bad. 

\begin{defn}
 For a ball arrangement on the spine of a $p$-caterpillar, a sequence of balls $B_1, \ldots, B_t$ is \dword{special} (and \dword{starts at} $B_1$) if it has the following properties: 
\begin{enumerate}

\item \underline{Increasing}: $rad(B_i) < rad(B_{i+1})$ for any non-tiny balls $B_i$ and $B_{i+1}$. 

\item \underline{Non-overlapping}: $B_i$ is immediately left of $B_{i+1}$ (i.e. immediately after the right endpoint of
$B_i$, comes the left endpoint of $B_{i+1}$).

\item \underline{Tiny balls}: The tiny balls are contiguous and the tiny ball region contains no root vertices.

\item \underline{Cover}: All the spine vertices in $B_1$ and rightward are covered by some $B_i$.
\end{enumerate}
\end{defn}

Note that in a special cover, there could be other overlapping balls in the arrangement, which we are ignoring when
 selecting a particular special sequence.  For a ball arrangement on the spine of a $p$-caterpillar that contains ball $B$, we say the arrangement is 
\dword{special from} $B$ if \emph{there exists} a special sequence starting at $B$.
 A cover is simply \dword{special}, if it is special from the leftmost ball.  For example, in Figure~\ref{fig:bigEx} (d), the cover of $T_0$ in \bigT\ is special and in (c), the cover of $T_1$ in \bigT\ is special from ball $B^{[1]}$ and does not use the ball of radius 0.

We now describe the basic idea of the proof of Theorem~\ref{thm:catp} more carefully.
To denote a spine cover of $T$ (recall that in a \emph{spine cover} each ball is centered on the spine)  we will sometimes write
$$\cover{B_1, B_2, \ldots, B_s},$$
where the notation indicates that the center of ball $B_i$ is left of the center of ball $B_j$ when $i < j$; we refer to ball $B_i$ as being left of $B_j$. 
The idea of the proof is to start with a special cover
$\mathcal{C}_0$ of the spine of \bigT \ (for $N = \lceil \sqrt{n}\ \rceil$) i.e.
$$\cover{B^{[p]}, \ldots, B^{[N-1]}, B^{[p-1]}, \dots,  B^{[1]},B^{[0]} },$$
which is increasing by radius, except for the tiny balls at the right.  The spine of $\bigT$ has a burning number of $\lceil \sqrt{n}\ \rceil$ since it is a path of order $n$.
As we successively transform 
$T_0$ into $T_k = T$ (i.e. following the cocoon definition), we simultaneously transform the cover $\mathcal{C}_0$, 
which covers $T_0$ in \bigT, into a cover $\mathcal{C}_k$, which covers $T$ in \bigT.  Constructing the cover will just involve combinations of \emph{shift} and \emph{jump} operations (defined in the next subsection), so that at the end of the process, using the same balls, we will have a spine cover of $T$, proving our goal; namely that $\burn(T) \le \lceil \sqrt{n}\ \rceil$.

\subsection{Operations on Covers}
\label{secOps}

We now present the two basic operations we will perform on a cover.  Both operations take a cover for a $p$-caterpillar and rearrange some of the balls to ensure certain roots are tree-covered.  

\subsubsection{Shift Operation}

\begin{defn} {\bf (Shift Operation)}
Suppose we have the following cover of some $p$-caterpillar:
$$\cover{B_1, \ldots, B_{j-1}, B_j, \ldots, B_s}.$$
For an integer $\te \ge 1$, an \dword{$\te$-shift} at $B_j$ yields a new arrangement of balls, with the center of $B_i$, for $i \in \{j,j+1,\dots,s\}$, moved $\te$ vertices to the left on the spine.  The rest of the balls remain unmoved.

\label{defn:shift}\end{defn}

For example, in Figure~\ref{fig:bigEx} we see a 2-shift at $B^{[1]}$ in $T_1$ (part (c)), results in a cover of $T_2$ (part (b)); then a 1-shift at $B^{[3]}$,
results in a cover of $T_3$ (part (a)).  

We can see that the change in excess
is described by the equation:
$$\varepsilon_{j+1} = \varepsilon_j + \h_{j+1} -\te.$$
For example, $\varepsilon_1 = 3$ in Figure~\ref{fig:bigEx} (c), and 
the $2$-shift at $B^{[1]}$ (so $\te = 2$) used to tree-cover $x_2$, leads to a new excess (using $j = 1$)
of $\varepsilon_{2} = \varepsilon_1 + \h_{2} - \te = 3 + 1 - 2 = 2.$
Similarly, the 1-shift at $B^{[3]}$, results in excess
$\varepsilon_{3} = \varepsilon_2 + \h_{3} - 1 = 2 + 1 - 1 = 2.$
Note that shifting at ball $B_j$ will turn a non-overlapping arrangement of balls into an overlapping arrangement of balls, though such an arrangement will remain non-overlapping among the balls $B_j, B_{j+1}, \ldots$.

%The next two lemmas are mostly immediate from the definitions, but highly useful.

The \emph{Left Shift Lemma}  generalizes what happened in the example above when 
we did a 1-shift at $B^{[3]}$ in order to tree-cover $x_3$.  
In this case,
the root $x_3$ was left-bad in the ball that was shifted.  Since we have to shift
at most $\h_3$ to tree-cover $x_3$, the excess stays the same or increases.

\begin{lemma} \label{lem:leftShift} {\bf (Left Shift Lemma)}  Let \cocoon{T_0, \ldots, T_k} be a cocoon of $p$-caterpillar $T$ and let $\mathcal{C}_j$ be a $T_j$-cover in \bigT\ where $B$ is a non-tiny ball containing $x_{j+1}$.  Suppose \smallskip

{\rm \tiny \textbullet} $x_{j+1}$ is left-bad in $B$  

{\rm \tiny \textbullet} $\mathcal{C}_j$ is special from $B$, and

{{\rm \tiny \textbullet} root vertices to the left of the tiny ball region are at distance greater than $p$ from the 

\hspace{0.05in} tiny ball region.\smallskip

For some $\te \le \h_{j+1}$, an $\te$-shift at $B$ will produce a $T_{j+1}$-cover in \bigT\ that is special from $B$ with $\varepsilon_{j+1} \geq \varepsilon_j$.}\medskip

\end{lemma}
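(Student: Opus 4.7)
The plan is to take $\te = \h_{j+1} - d$, where $d \in \{0, 1, \ldots, \h_{j+1} - 1\}$ is the distance from $x_{j+1}$ to the left endpoint of $B$ (well-defined since $x_{j+1}$ is left-bad in $B$). Then $1 \le \te \le \h_{j+1}$, meeting the bound demanded by the lemma, and I would carry out the $\te$-shift at $B$ and verify each required conclusion in turn.

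After the shift, $x_{j+1}$ lies at distance exactly $d + \te = \h_{j+1}$ from the new left endpoint of $B$; since $B$ is non-tiny the radius $r$ satisfies $r \ge p \ge \h_{j+1}$, so $x_{j+1}$ is also at distance at least $2r - \h_{j+1} \ge \h_{j+1}$ from the new right endpoint, and hence $B$ tree-covers $x_{j+1}$. The $\te$-shift rigidly translates the whole special sequence $B, B_2, \ldots, B_t$ left by $\te$, so its covered spine region simply shifts left by $\te$ too. The coverage lost on the right consists of $\te \le \h_{j+1}$ spine vertices, all of which lie in the portion of $\bigT$'s spine that sits outside $T_{j+1}$, since $T_{j+1}$'s spine is $\h_{j+1}$ vertices shorter on the right than $T_j$'s. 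Balls strictly left of $B$ are unmoved, so any root $x_i$ ($i \le j$) that $\mathcal{C}_j$ tree-covers by such a ball keeps its tree-cover; in the rare case such an $x_i$ is tree-covered by $B$ itself, it sits in the left portion of $B$ with $d_i < d < \h_{j+1} \le p$, and a short estimate using $r \ge p$ shows the shifted $B$ still tree-covers it.

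For the ``special from $B$'' conclusion, the increasing-radii, non-overlap, tiny-contiguous, and cover-from-$B_1$-rightward conditions are all preserved by a rigid translation of the special sequence. The delicate point is ensuring the tiny ball region remains root-free. The tiny ball region translates left by $\te \le \h_{j+1} \le p$, and by the third hypothesis every root to the left of the original tiny ball region is at distance greater than $p$ from it, so no such root is absorbed after the translation; roots to the right of the tiny ball region lie in non-tiny balls that translate along with them and so remain outside. Finally, the excess identity $\varepsilon_{j+1} = \varepsilon_j + \h_{j+1} - \te$ combined with $\te \le \h_{j+1}$ gives $\varepsilon_{j+1} \ge \varepsilon_j$.

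The step I expect to be most delicate is keeping the tiny ball region root-free after the leftward translation, which is exactly what the third hypothesis of the lemma is designed to handle; everything else reduces to routine bookkeeping about how the rigid left-translation of the special sequence interacts with the slightly shorter spine of $T_{j+1}$.
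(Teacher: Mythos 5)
Your proof is correct and follows essentially the same route as the paper's. You make the choice of $\te$ explicit as $\te = \h_{j+1} - d$ where $d$ is the distance from $x_{j+1}$ to the left endpoint of $B$ (the paper merely asserts that some $\te \le \h_{j+1}$ works), and you spell out a bit more carefully the edge case in which a previously tree-covered root $x_i$ with $i \le j$ lies inside $B$ itself rather than strictly to its left --- a point the paper passes over with the single line that balls to the left of $B$ do not move. Your handling of the tiny-ball-region invariant via the third hypothesis matches the paper's intent, and the excess calculation is identical.
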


\begin{proof}
Since $\h_{j+1}$ is the height of the subtree rooted at $x_{j+1}$, there is an $\te \le \h_{j+1}$ for which an $\te$-shift at $B$ results in $x_{j+1}$ being tree-covered by $B$;   
essential to this point is that $\h_{j+1} \leq p \leq \rad(B)$.  Let $\mathcal{C}_{j+1}$ be the result of an $\te$-shift at $B$. 
Balls to the left of $B$ are not moved during the $\te$-shift, so roots $x_1,\dots,x_j$ remain tree-covered in $\mathcal{C}_{j+1}$.  Recall from Definition~\ref{defn:cocoon} the difference between $T_j$ and $T_{j+1}$: in $T_{j+1}$, subtree $T[x_{j+1}]$ is rooted at $x_{j+1}$, whereas in $T_j$, the 
subtree is removed and 
$\h_{j+1}$  vertices are appended as a path to the right end of the spine.  Since 
$\te \leq \h_{j+1}$, all the vertices of $T_{j+1}$ will be covered by $\mathcal{C}_{j+1}$. Furthermore, since we shift at most $p$,
the tiny ball region contains no root vertices.
 Thus, $\mathcal{C}_{j+1}$ is a $T_{j+1}$-cover in \bigT. Additionally, observe that 
$\mathcal{C}_{j+1}$ is special from $B$.  Finally, since $\te \leq \h_{j+1}$, $$\varepsilon_{j+1} = \varepsilon_j +\h_{j+1}-\te \geq \varepsilon_j.$$

\end{proof}

We state a weaker version of the Left Shift Lemma that removes the requirement of the cover being special. This will be useful in one part of proof of Theorem~\ref{thm:catp}.

\begin{lemma} \label{lem:WEAKleftShift} {\bf (Weak Left Shift Lemma)}  Let \cocoon{T_0, \ldots, T_k} be a cocoon of $p$-caterpillar $T$ and let $\mathcal{C}_j$ be a $T_j$-cover in \bigT\ where $B$ is a non-tiny ball containing $x_{j+1}$.  Suppose $x_{j+1}$ is left-bad in $B$. 
For some $\te \le \h_{j+1}$, an $\te$-shift at $B$ will produce a $T_{j+1}$-cover in \bigT\  with $\varepsilon_{j+1} \geq \varepsilon_j$.
\end{lemma}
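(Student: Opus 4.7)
The plan is to mirror the proof of the Left Shift Lemma, dropping the parts that invoke specialness of the cover or the tiny-ball hypothesis.

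First, exactly as in the Left Shift Lemma, because $x_{j+1}$ is left-bad in the non-tiny ball $B$, it lies within distance less than $\h_{j+1}$ of $B$'s left endpoint; and because $\h_{j+1} \le p \le \rad(B)$, there is some $\te \le \h_{j+1}$ for which shifting $B$ leftward by $\te$ re-positions its left endpoint far enough that $x_{j+1}$ is tree-covered by the shifted $B$. Let $\mathcal{C}_{j+1}$ be the resulting arrangement. The excess identity $\varepsilon_{j+1} = \varepsilon_j + \h_{j+1} - \te$ together with $\te \le \h_{j+1}$ gives $\varepsilon_{j+1} \ge \varepsilon_j$ immediately.

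Next I would check that $\mathcal{C}_{j+1}$ is indeed a $T_{j+1}$-cover in $\bigT$. The roots $x_1,\dots,x_j$ remain tree-covered: balls in $\{B_1,\dots,B_{j-1}\}$ that tree-covered them are unchanged, and for any ball in $\{B,\dots,B_s\}$ that tree-covered some $x_i$ with $i\le j$, the leftward shift moves the center closer to $x_i$ (which lies to the left of that center), so it continues to tree-cover. The root $x_{j+1}$ is tree-covered by the newly-shifted $B$ by choice of $\te$, so the whole subtree $T[x_{j+1}]$ is covered.

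The real work is verifying that every spine vertex $v_k$ of $T_{j+1}$ (so $1\le k\le m-\h_{j+1}$) is still covered. I would argue by contradiction: if such a $v_k$ is uncovered in $\mathcal{C}_{j+1}$, then in $\mathcal{C}_j$ the vertex $v_k$ must have been covered only by balls in $\{B,\dots,B_s\}$ (else it remains covered by an unshifted ball), and the vertex $v_{k+\te}$ must have been covered only by balls in $\{B_1,\dots,B_{j-1}\}$ (else a shifted ball would now cover $v_k$). Since $k+\te\le m$, the vertex $v_{k+\te}$ lies in the spine of $T_j$, so its original coverage is guaranteed. A case split on whether $v_{k+\te}$ lies to the left or to the right of $B$'s coverage, combined with the interval nature of balls and the inequality $\rad(B)\ge p\ge\te$, then forces either a ball left of $B$ reaching $v_{k+\te}$ to also reach $v_k$, or a ball from $\{B,\dots,B_s\}$ reaching $v_k$ to also reach $v_{k+\te}$, each contradicting the setup.

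The main obstacle is precisely this last geometric case split, since without specialness of $\mathcal{C}_j$ the ball layout is essentially unconstrained and the argument must be done from the interval structure alone. The inequality $\rad(B)\ge p\ge\te$ is the key tool, and each case should close quickly.
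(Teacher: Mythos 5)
Your proposal is correct, and it in fact does more than the paper does for this statement: the paper gives no separate proof of the Weak Left Shift Lemma, implicitly reading it off from the proof of the Left Shift Lemma, and in that proof the assertion that all of $T_{j+1}$ stays covered is immediate only because the cover is special from $B$ --- the balls from $B$ rightward form a contiguous non-overlapping block, so shifting that block left by $\te$ loses exactly the last $\te$ covered spine vertices and nothing interior. You correctly identify that once specialness is dropped this is the one step needing a genuine argument, and your interval argument does close. Concretely, if some $v_k$ on the covered prefix with $v_{k+\te}$ still covered before the shift becomes uncovered, then some unshifted ball $A$ covers $v_{k+\te}$ but not $v_k$, so its center satisfies $c_A>v_k$; the shifted ball through $v_k$ misses $v_{k+\te}$, so its right endpoint, and hence its center, lies left of $v_{k+\te}$; since $B$'s center is at most that center and $B$ also misses $v_{k+\te}$, the right endpoint of $B$ lies left of $v_{k+\te}$, giving $c_A<c_B<v_{k+\te}-\rad(B)\le v_k$ because $\te\le \h_{j+1}\le p\le \rad(B)$ --- a contradiction; and if instead $v_{k+\te}$ lies left of $B$, the shifted ball through $v_k$ would have to contain $v_{k+\te}$, again a contradiction. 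So the ``geometric case split'' is not an obstacle but the actual content of the weak lemma, and what it buys is a statement valid for arbitrary distinct spine covers, which is exactly what the ``clearing the roots'' step in the proof of Theorem~\ref{thm:catp} requires; the paper's specialness-based shortcut buys brevity but does not literally apply here.

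Two points to tighten. First, your claim that a shifted ball which tree-covered some $x_i$, $i\le j$, still does because its center ``moves closer to $x_i$'' silently assumes the center does not overshoot $x_i$; this is fine if you take the minimal $\te$ that tree-covers $x_{j+1}$, since then the new center of $B$ is still at distance at least $\rad(B)-\h_{j+1}\ge 0$ to the right of $x_{j+1}>x_i$, and every other shifted center lies further right (the same monotonicity also handles subtrees of earlier roots that were covered piecemeal by several balls rather than tree-covered by one). Second, Definition~\ref{defn:Ti} also requires the covered spine vertices of \bigT\ to remain leftward-closed, and the excess bound needs your coverage argument applied up to the end of the whole old covered prefix, not only up to the spine of $T_{j+1}$; both follow from the same interval reasoning (the paper omits these checks as well), so they are routine but worth a sentence.
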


The  \emph{Right Shift Lemma}  generalizes what happened in Figure~\ref{fig:bigEx}, part (c)  when 
we did a 2-shift at $B^{[1]}$ in order to tree-cover $x_2$ in part (b).  In this case,
the root $x_2$ was right-bad in a ball, so we leave that ball alone, and shift at the ball immediately to its right.  Now we might have to shift
up to $2 \h_2$ to tree-cover $x_2$, so the excess could go down as much as $\h_2$, so to keep the excess non-negative, we begin with excess at least $\h_2$.

\begin{lemma} \label{lem:rightShift} {\bf (Right Shift Lemma)} 
Let \cocoon{T_0, \ldots, T_k} be a cocoon of $p$-caterpillar $T$ and let $\mathcal{C}_j$ be a $T_j$-cover in \bigT\ where $B$ is a non-tiny ball containing $x_{j+1}$, and $B^*$ is a non-tiny ball directly right of $B$. If \smallskip

{\rm \tiny \textbullet} $x_{j+1}$ is right-bad in $B$, 

{\rm \tiny \textbullet} $\mathcal{C}_j$ is special from $B^*$, 

{\rm \tiny \textbullet} root vertices to the left of the tiny ball region are at distance greater than $2p$
from 

\hspace{0.075in} the tiny ball region, and

{\rm \tiny \textbullet} $\varepsilon_j \geq \h_{j+1}$,\smallskip

\noindent then for some $\te \le 2\h_{j+1}$, an $\te$-shift at $B^*$ will produce a $T_{j+1}$-cover in \bigT\ that is special from $B^*$ with $\varepsilon_{j+1} \geq \varepsilon_j-\h_{j+1} \geq 0$.
\end{lemma}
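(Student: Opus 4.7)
The plan is to mimic the proof of the Left Shift Lemma, but to perform the shift at $B^*$ rather than at $B$, absorbing an extra factor of $2$ in the allowable shift length. Since $x_{j+1}$ is right-bad in $B$, it sits within distance at most $\h_{j+1}-1$ from the right endpoint of $B$, and therefore within distance at most $\h_{j+1}$ from the left endpoint of $B^*$ (which lies immediately right of $B$). Sliding $B^*$ leftward by some $\te$ brings its center closer to $x_{j+1}$, and once $x_{j+1}$ is at spine distance at least $\h_{j+1}$ from both endpoints of the shifted $B^*$, the entire subtree rooted at $x_{j+1}$ is contained in $B^*$. A short position computation -- which uses only $\rad(B^*) \ge p \ge \h_{j+1}$ -- shows that some $\te \le 2\h_{j+1}$ suffices: roughly, $\h_{j+1}$ drags $B^*$ across the gap separating it from $x_{j+1}$, and another $\h_{j+1}$ leaves room for the subtree.

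Next I would verify the result $\mathcal{C}_{j+1}$ is a $T_{j+1}$-cover in $\bigT$. Balls strictly left of $B^*$ (including $B$) are fixed, so the tree-coverings of $x_1,\ldots,x_j$ persist and the initial prefix of spine vertices they cover is unchanged; and the shifted $B^*$ tree-covers $x_{j+1}$ by construction. Since the spine of $T_{j+1}$ ends $\h_{j+1}$ vertices earlier (on the right) than that of $T_j$ while the rightmost covered spine vertex of $\bigT$ slides left by exactly $\te$, the excess update rule is
\[
\varepsilon_{j+1} \;=\; \varepsilon_j + \h_{j+1} - \te.
\]
Combining $\te \le 2\h_{j+1}$ with the hypothesis $\varepsilon_j \ge \h_{j+1}$ immediately gives $\varepsilon_{j+1} \ge \varepsilon_j - \h_{j+1} \ge 0$, so the full spine of $T_{j+1}$ stays covered and the required excess bound holds.

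Finally I would verify specialness from $B^*$. Rigidity of the simultaneous shift preserves the increasing-radii, non-overlapping, and covering conditions among the balls from $B^*$ rightward, and the tiny balls remain contiguous. The delicate point is that the tiny ball region itself slides leftward by $\te \le 2\h_{j+1} \le 2p$, so one must argue no root drops into it. The hypothesis that every root left of the old tiny ball region sat at distance greater than $2p$ from it guarantees each such root remains strictly left of the new tiny ball region; the only new root to worry about is $x_{j+1}$, but $x_{j+1}$ now lies safely inside the non-tiny ball $B^*$, to the left of the tiny region. This tiny-ball bookkeeping is the main obstacle I anticipate: shifting at $B^*$ rather than $B$ drags a longer train of balls leftward by up to $2\h_{j+1}$ rather than $\h_{j+1}$, and this is precisely what forces the strengthened $2p$-distance assumption (compared to $p$ in the Left Shift Lemma) together with the excess requirement $\varepsilon_j \ge \h_{j+1}$, so that the extra slack is available both geometrically (no roots swallowed by the tiny region) and combinatorially (the excess stays nonnegative).
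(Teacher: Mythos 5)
Your proposal is correct and follows essentially the same route as the paper's proof: shift at $B^*$ by some $\te \le 2\h_{j+1}$ to tree-cover $x_{j+1}$, apply the excess update rule $\varepsilon_{j+1} = \varepsilon_j + \h_{j+1} - \te$ together with $\varepsilon_j \ge \h_{j+1}$, and use the $2p$-distance hypothesis to keep roots out of the shifted tiny ball region. Your version is somewhat more explicit about the position computation and the tiny-ball bookkeeping, but the key ideas and their order match the paper.
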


\begin{proof} 

For some $\te \leq 2\h_{j+1}$ an $\te$-shift at $B^*$ results in $x_{j+1}$ being tree-covered by $B^*$  (in the worst case, an $\h_{j+1}$-shift would get $x_{j+1}$ in $B^*$, but not tree-covered, requiring up to $\h_{j+1}$ additional shifts to tree-cover $x_{j+1}$); call this arrangement $\mathcal{C}_{j+1}$.  As the balls to the left of $B^*$ are not moved, roots $x_1,\dots,x_j$ were tree-covered in $\mathcal{C}_j$ and remain tree-covered in $\mathcal{C}_{j+1}$. 

Observe that there are $\h_{j+1}$ fewer vertices on the right end of the spine of $T_{j+1}$ than $T_j$.  From Definition~\ref{defn:Ti}, $\varepsilon_j$ is the number of vertices on the spine of \bigT $- T_j$ that are covered by $\mathcal{C}_j$.  As $\varepsilon_j \geq \h_{j+1}$, after we do an $\te$-shift at $B^*$ for $\te \leq 2\h_{j+1}$, 
$$\varepsilon_{j+1} = \varepsilon_j+  \h_{j+1}  -\te \geq \varepsilon_j+\h_{j+1} - 2\h_{j+1}= \varepsilon_j - \h_{j+1} \geq 0.$$  
Since $\varepsilon_{j+1} \geq 0$, all vertices on the spine of $T_{j+1}$ are covered by $\mathcal{C}_{j+1}$.  Furthermore, since we shift at most $2p$,
the tiny ball region contains no root vertices.
Thus, $\mathcal{C}_{j+1}$ is a $T_{j+1}$-cover in \bigT. Additionally, observe that $\mathcal{C}_{j+1}$ is special from $B^*$.\end{proof}

%Since $T_{j+1}$ has $|V(T[x_{j+1}])|-1$ fewer vertices on the end of the spine than $T_j$, and balls were shifted $x \leq h_{j+1} \leq |V(T[x_{j+1}])|-1$ vertices to the left, all spine of $T_{j+1}$ will be covered by $\mathcal{C}_{j+1}$.  Thus, $\mathcal{C}_{j+1}$ is a $T_{j+1}$-cover in \bigT.  Furthermore, the balls are increasing and non-overapping starting from $B^*$ (which now contains $x_{j+1}$); thus $\mathcal{C}_{j+1}$ is special.  Finally, $\varepsilon_{j+1} = \varepsilon_j + |V(T[x_{j+1}])|-x \geq \varepsilon_j + h_{j+1}-x \geq \varepsilon - p$.

\subsubsection{Jump Operation}

\begin{defn} {\bf (Jump Operation)}
Suppose we have the following cover of some $p$-caterpillar:
$$\cover{B_1, \ldots, B_{i-1}, B_i, \ldots,  B_{k-1}, B_k, B_{k+1}, \ldots, B_s}.$$
To \dword{jump} $B_k$ to $B_i$ yields the following new arrangement of balls:
$$\cover{B_1, \ldots, B_{i-1}, B_k, B_i, \ldots,  B_{k-1}, B_{k+1}, \ldots, B_s},$$
i.e. $B_k$ has its left endpoint where $B_i$ used to have its left endpoint, and all the balls at $B_i$ and right, are moved to the right  so that the left endpoint of $B_i$ immediately follows the right endpoint of $B_k$. 
\end{defn}

The jump operation is pictured in Figure~\ref{fig:ob_twice} where we see that from the top illustration to the bottom illustration, ball $R$ is jumped to ball $L$.  The left endpoint of $L$, and any balls (whether zero or more) that were between $L$ and $R$, are shifted to the right by $2\cdot \rad(R)+1$ in order to accommodate the placement of ball $R$.
For an example of the jump operation, consider the cover of $T_0$ given in Figure~\ref{fig:bigEx} (d) and observe that $x_1$ is right-bad in $B^{[1]}$ and the cover in (d) has excess $\varepsilon_0=2$.  We jump $B^{[2]}$ to $B^{[1]}$ to get the cover of $T_1$ given in Figure~\ref{fig:bigEx} (c), and note that the resulting cover has excess 
$\varepsilon_1 = 3  = \varepsilon_0 + 1$.

 In general, in a cocoon, if we have a cover of $T_{j}$ in which $x_{j+1}$ is left- or right-bad  and we jump a ball to get a cover of $T_{j+1}$, then 
 $\varepsilon_{j+1} = \varepsilon_{j}+ \h_{j+1}$; we will see this point in the next lemma.
In the next lemma, we discuss when we can rearrange the balls (think of doing repeated jump operations) in order to tree-cover $z$ many roots, so our excess will increase by the sum of the corresponding $z$ many $\h_i$ values.  
This gain in excess will depend on the strong assumption of having enough non-tiny balls on the right.

\begin{figure}[h]
\[ \includegraphics[width=0.65\textwidth]{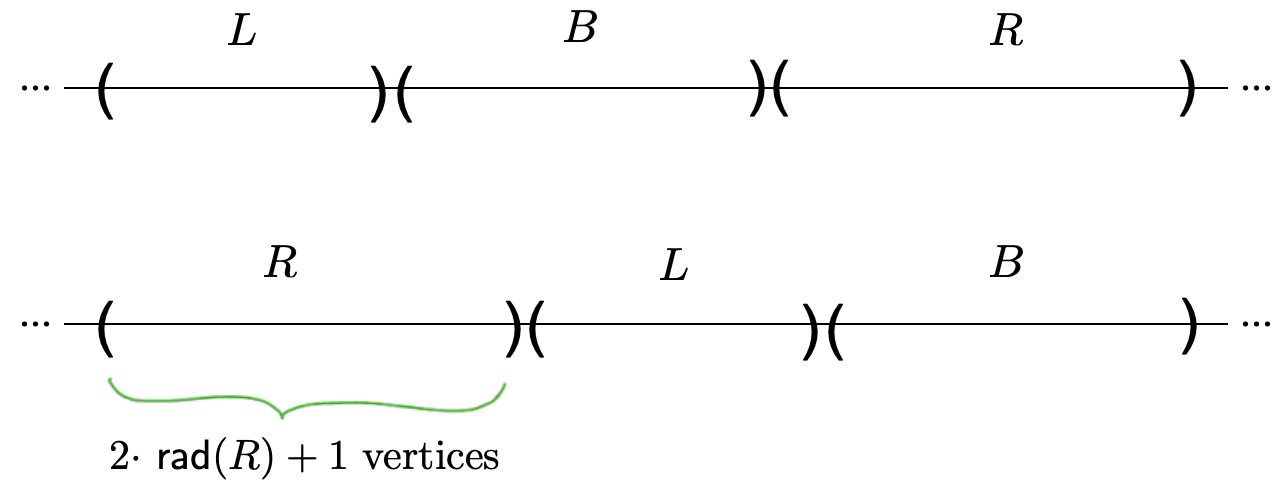} \]
\caption{An illustration of the jump operation with ball $R$ jumped to $L$.}
\label{fig:ob_twice}
\end{figure}

%\begin{remark} \label{ob_twice}Consider the top of Figure~\ref{fig:ob_twice}. Ball $L$ has endpoints $a$ and $b$. Farther right is ball $R$ and $\rad(R)=\rad(L)+w$. Jumping $R$ to $L$, the bottom of Figure~\ref{fig:ob_twice}, results in $R$ having left endpoint $a$ and right endpoint $2w$ vertices to the right of $b$.  \end{remark}

\begin{figure}[htbp]
\[ \includegraphics[width=0.875\textwidth]{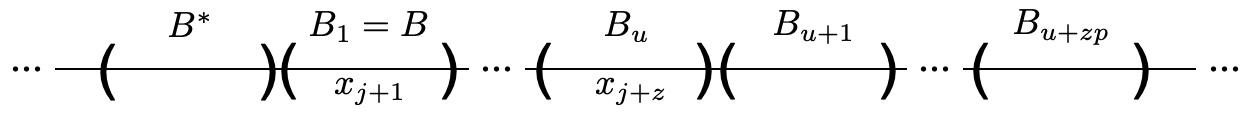}  \]

\caption{An illustration of the labelings of the balls described in the proof of Lemma~\ref{Pnew_lem_many_jumps}.}
\label{fig_Jump}
\end{figure} 

\begin{lemma} {\bf (Jump Lemma)}
\label{Pnew_lem_many_jumps} Let \cocoon{T_0, \ldots, T_k} be a cocoon of $p$-caterpillar $T$ and let $\mathcal{C}_j$ be a $T_j$-cover in \bigT, where $B$ is the rightmost non-tiny ball containing $x_{j+1}$, and $B^*$ is the ball directly left of $B$, if it exists. Suppose the tiny ball region is at the rightmost vertices of the spine. Suppose further that we are in one of the following two situations:\medskip

{\rm \tiny \textbullet} $x_{j+1}$ is right-bad or tree-covered in $B$ and $\mathcal{C}_j$ is special beginning at $B$, or \smallskip

{\rm \tiny \textbullet}  $x_{j+1}$ is left-bad in $B$ and $\mathcal{C}_j$ is special beginning at $B^*$. \medskip

\noindent Further, for some $z \geq 1$, suppose that there are at least $zp$ non-tiny balls of $\mathcal{C}_j$ right of the ball containing $x_{j+z}$.
There is a rearrangement of the balls that results in a $T_{j+z}$-cover in \bigT\ that is special beginning at the ball directly right of the ball containing $x_{j+z}$; and with excess
$$\varepsilon_{j+z} \geq \varepsilon_j + \sum_{i=j+1}^{z + j} \h_i.$$

\end{lemma}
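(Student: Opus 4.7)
The plan is to induct on $z$.

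Base case $z=1$: We must tree-cover $x_{j+1}$, maintain specialness, and ensure $\varepsilon_{j+1} \geq \varepsilon_j + \h_{j+1}$. If $x_{j+1}$ is already tree-covered in $B$, no rearrangement is required — the cover $\mathcal{C}_j$ itself serves as a valid $T_{j+1}$-cover, and since the spine of $T_{j+1}$ has $\h_{j+1}$ fewer vertices than the spine of $T_j$, the excess increases by exactly $\h_{j+1}$. If $x_{j+1}$ is left-bad or right-bad, we jump a suitable non-tiny ball from the right of the ball containing $x_{j+1}$, placing it so that $x_{j+1}$ becomes tree-covered by the jumped ball (its radius is $\geq p \geq \h_{j+1}$, so such a placement is possible). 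The jump operation preserves the total covered spine region, since the intermediate balls shift right by exactly the width of the jumped ball, filling the gap left behind. Thus we again obtain $\varepsilon_{j+1} = \varepsilon_j + \h_{j+1}$. Specialness from the appropriate ball is maintained because the jumped ball slots into the increasing-radius sequence at its new position, while the tiny ball region (untouched) remains at the rightmost vertices.

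Inductive step: Assume the lemma for parameter $z-1$. Handle $x_{j+1}$ as above to produce a $T_{j+1}$-cover $\mathcal{C}_{j+1}$ with $\varepsilon_{j+1} \geq \varepsilon_j + \h_{j+1}$. Then verify that $\mathcal{C}_{j+1}$ satisfies the hypotheses at level $j+1$ with parameter $z-1$: (i) the tiny ball region remains at the rightmost spine vertices; (ii) $\mathcal{C}_{j+1}$ is special from the appropriate ball (determined by whether $x_{j+2}$ is tree-covered, right-bad, or left-bad in its containing ball); and (iii) at least $(z-1)p$ non-tiny balls remain to the right of the ball containing $x_{j+z}$. The inductive hypothesis then yields a $T_{j+z}$-cover whose excess telescopes to $\varepsilon_j + \sum_{i=j+1}^{j+z} \h_i$, and which is special from the ball directly right of the ball containing $x_{j+z}$.

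The main obstacle is justifying the $zp$ non-tiny ball budget, i.e. showing that handling one root consumes at most $p$ non-tiny balls rather than just one. I expect this stems from the fact that a single jump can disturb the strict increasing-radius ordering or push balls rightward by up to $2p$ vertices, potentially disrupting the positions of future roots $x_{j+2}, \ldots, x_{j+z}$ or forcing additional adjustment jumps to restore specialness before the inductive hypothesis can be applied. The core technical work is to track precisely where each root sits across the jumps, to show that the number of jumps triggered by a single root is bounded by $p$, to confirm that total spine coverage is preserved by each intermediate rearrangement, and to verify that the final cover is special from the ball directly right of the ball containing $x_{j+z}$, as required by the conclusion.
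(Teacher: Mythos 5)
Your overall shape (handle the target roots one at a time by jumping a larger ball in from the right, and observe that since a jump preserves the covered portion of the spine the excess grows by $\h_i$ per tree-covered root) matches the spirit of the paper's proof, but there are two genuine gaps at exactly the point you flag as unresolved. First, your base case asserts that any non-tiny ball from the right can be jumped and ``placed so that $x_{j+1}$ becomes tree-covered,'' justified only by its radius being $\geq p \geq \h_{j+1}$. The jump operation does not allow free placement: the jumped ball's left endpoint is pinned to the old left endpoint of the destination ball, and the root does not move. If $x_{j+1}$ is right-bad in $B$ and the jumped ball $X$ has radius only slightly larger than $B$ (or than the ball left of $B$, in the left-bad case), then $x_{j+1}$ can remain within distance less than $\h_{j+1}$ of the right endpoint of $X$, i.e.\ still not tree-covered. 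What is actually needed is a radius \emph{margin} of about $p$ between $X$ and the ball it is jumped to; the paper obtains this by choosing candidates only from every $p$-th ball of the strictly increasing special sequence, $\mathcal{B}=\{B_{u+ip} : i=1,\dots,z\}$, and this margin -- not disturbance of the ordering or a $2p$ push, as you guess -- is the real source of the $zp$ budget.

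Second, your inductive step quietly assumes that after one jump the hypotheses hold again for $x_{j+2}$ with parameter $z-1$, but this can fail: the next target root may be right-bad in the jumped ball $X$, or left-bad in the ball directly right of $X$. In that configuration the arrangement is \emph{not} special from the ball the induction needs, because $X$ is a large ball sitting immediately left of the smaller balls it jumped over, destroying the increasing property of any special sequence starting at or just after $X$. The paper's proof resolves exactly this with the notion of a \emph{problematic} jump: it takes the smallest candidate from $\mathcal{B}$ whose right boundary creates no such problem for any target root, and the key accounting observation is that each time a candidate is rejected as problematic, the next larger candidate tree-covers an additional target root, so at most $z$ candidates (hence at most $zp$ non-tiny balls) are ever consumed. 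Without this mechanism your induction cannot be closed, so as written the proposal has a gap at its core rather than being a complete alternative argument.
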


%Consider a $p$-caterpillar $T$ with cocoon \cocoon{T_0, \ldots, T_k}.  Consider some $T_j$ ($j \le k - p$) and suppose we have a cover for $T_{j-1}$, in which $B$ is the rightmost non-tiny ball containing $x_j$, and $B^*$ is the ball directly left of $B$ if it exists.  
%\begin{enumerate} \item Suppose $x_j$ is {\bf not} left-bad in $B$ and the cover is non-overlapping and increasing from $B$.  

%\item Suppose $x_j$ is left-bad in $B$ and the cover is non-overlapping and increasing from $B^*$.  \end{enumerate} Further, suppose that there are at least $zp$ non-tiny balls right of the ball containing $x_{j+z}$.  Then there is a rearrangement of the balls so that the result is a cover of $T_{j+z}$, and the cover is increasing and non-overlapping from the ball directly right of the ball containing $x_{j+z}$.\end{lemma}

\begin{proof} 
Since cover $\mathcal{C}_j$ is special from $B$ or $B^*$ we can select
a special sequence of non-overlapping balls starting at $B$, calling it:
\[B_1, B_2, B_3,\dots,B_u,B_{u+1},\dots,B_{u+zp},\dots,\]
where $B_1$ is $B$, $x_{j+z}$ is in $B_u$ (we allow for the possibility that $B_1=B_u$, i.e. $x_{j+z}$ is in $B$), the non-overlapping $B_i$ increase in radius (see Figure~\ref{fig_Jump}), and $B_i$ is non-tiny for $1 \leq i \leq u+zp$.

Note that it is important that we actually have non-tiny balls up to and including ball  $B_{u+zp}$ due to the assumption that we have at least $zp$ many non-tiny balls right of the root $x_{j+z}$.
In this proof, we refer to the roots $x_{j+1}, \ldots, x_{j+z}$ as the \dword{target roots}.  We will tree-cover all the target roots by a series of well chosen jump operations. 

The proof will involve a series of steps (at least one step, and at most $z$ steps, depending on the situation), where at each of these steps we will jump an appropriately chosen 
ball from $\mathcal{B}=\{ B_{u+ip} \ | \ i = 1, \ldots, z \}$ to a ball left of it.
A key point will be that the right boundary of the jumped ball $X$ from $\mathcal{B}$ is not \dword{problematic}, where by \emph{problematic} we mean that some target root is right-bad in $X$, or left-bad in the ball directly right of $X$,
i.e. there is a ``problem'' near the right boundary of $X$.

In Step 1, we consider target root $x_{j+1}$, which is in ball $B$.  If $x_{j+1}$ is 
right-bad we will jump an appropriate ball $X$ from $\mathcal{B}$  to the ball $B$,
while if $x_{j+1}$ is left-bad we will jump $X$ to the ball directly left of $B$ (in either case, tree-covering $x_{j+1}$).
In either case, we will choose $X$ to be the smallest ball from $\mathcal{B}$
such that  $X$ is not problematic.  We argue that such an $X$ exists.  First 
we note that any ball from $\mathcal{B}$ will work to tree-cover $x_{j+1}$,
because the radius of $X$ is at least $p$ larger than the ball it is jumped to;
thus $X$ occupies at least $2p$ more vertices.  Thus $x_{j+1}$ will
be in $X$ and neither left- nor right-bad. In other words, if $x_{j+1}$ is right-bad in $B$ then to guarantee that $x_{j+1}$ is not less than distance $\h_{j+1}$ from the right boundary we need only increase the radius of the ball that contains it by at most $p$, which is satisfied by jumping $X$ to $B$. If $x_{j+1}$ is left-bad in $B$, jumping $X$ to the ball directly left of $B$ results in $x_{j+1}$ being in $X$ and to the right of the center of the ball. It can be verified that $x_{j+1}$ is not right-bad in $X$ since as in the previous situation, the additional $p$ vertices guarantee that $x_{j+1}$ is not less than $\h_{j+1}$ from the right boundary of $X$.

We next argue that there is such a ball $X$ that will not be problematic for any target roots.  If $X = B_{u+p}$ is problematic for some target root, say $x_{j+2}$, then we instead consider $X = B_{u+2p}$, which will tree-cover both $x_{j+1}$ and $x_{j+2}$.  Continuing in this fashion, the step ends with some initial number of target roots (at least one) tree-covered, using a single ball from $\mathcal{B}$, leaving some (perhaps none) of the  target roots uncovered.  Suppose $x_{j+i}$ is the first uncovered target root; that is, $X$ covered the first $(i-1)$ target roots.  Since the jump was not problematic, the next target root $x_{j+i}$ starts as $x_{j+1}$ did: either it is i) right-bad (or already tree-covered), and special starting at the ball it is in, or ii) it is left-bad and special starting at the ball directly left of it.

Step 2 now does exactly the same kind of action as done in Step 1, though to the next target root $x_{j+i}$.  To see that the same reasoning applies, note that $x_{j+i}$ is somewhere among $B_1, \ldots, B_u$, so any choice of ball from 
$\mathcal{B}$
will tree-cover it. Furthermore, we can make an unproblematic choice,
since, as in Step 1, if a choice is problematic, we can go to the next bigger ball.
Note that in avoiding a problematic choice we cannot run out of balls from 
$\mathcal{B}$ since each time a ball is problematic and we go to the next bigger ball from $\mathcal{B}$, we in fact cover another target root.  

We continue in this manner using as many steps as needed to obtain a  $T_{j+z}$-cover in \bigT\ that is special, beginning at the ball directly right of the ball containing $x_{j+z}$.  
Finally, note that since we have covered roots just by rearranging balls, we have
 $\varepsilon_{j+z} \geq \varepsilon_j+ \sum_{i=j+1}^{j+z} \h_i$.

 \end{proof}

\section{Preliminary Results}

Hiller et al.\ \cite{cat2019} observed that for a $p$-caterpillar with spine length $t$, $\burn(T) \le \sqrt{t} + p$: first cover the vertices of the spine using balls of radius $0,1,\dots,\sqrt{t} - 1$; then to ensure all non-spine vertices will be covered, increase the radius of every ball by $p$.  Thus, the BNC holds trivially for $p$-caterpillars when $\sqrt{t} + p \le \sqrt{n}$.
Let $z$ be the number of non-spine vertices, so $n = t + z$, and thus, 
the trivial case is when $\sqrt{t+z} \ge \sqrt{t} + p$; squaring both sides of the inequality and simplifying yields the inequality $$z \ge 2p\sqrt{t} + p^2.$$

\begin{remark}\label{remark1} If the number of non-spine vertices is at least $2p\sqrt{t}+p^2$ then the BNC holds.\end{remark}

If we consider $p$ to be fixed, the result is trivial when the number of non-spine
vertices (i.e.~$z$) is somewhat larger than the square root of the number of spine vertices (i.e.~$t$).  
In Section~\ref{sec:pcats}, we consider case where the number of non-spine vertices is simply a polynomial of $p$, so as $t$ increases,
our results are not trivial.\medskip

In the following observation we address the issue of when the number of vertices in a $p$-caterpillar is not a perfect square.

\begin{observation}\label{obs:perfectsquare} 
If $(x - 1)^2 < y < x^2$ and the BNC holds for $p$-caterpillars on $x^2$ vertices, then the BNC holds for 
$p$-caterpillars on $y$ vertices.

 \end{observation}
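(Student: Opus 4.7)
The plan is to reduce to the perfect-square case by extending $T$ with a path and transferring the resulting cover back. Since $(x-1)^2<y<x^2$ forces $\lceil\sqrt{y}\,\rceil=x$, it suffices to show $\burn(T)\le x$.

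First, I would let $(v_1,\ldots,v_t)$ be the spine of $T$ and form $T^+$ by appending a fresh path $u_1,u_2,\ldots,u_{x^2-y}$ at $v_t$, with $u_1$ adjacent to $v_t$. Every vertex of $T$ is already within distance $p$ of the old spine, so every vertex of $T^+$ is within distance $p$ of the extended path $v_1,\ldots,v_t,u_1,\ldots,u_{x^2-y}$. Taking a maximal extension of this path as spine makes $T^+$ a $p$-caterpillar on exactly $x^2$ vertices. By hypothesis, $\burn(T^+)\le\lceil\sqrt{x^2}\,\rceil=x$, so $T^+$ admits a spine cover $\mathcal{C}^+$ with $x$ balls of distinct radii $0,1,\ldots,x-1$.

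Next, I would construct a cover $\mathcal{C}$ of $T$ with the same $x$ radii as follows: for every ball of $\mathcal{C}^+$ whose center lies in the appended tail $\{u_1,\ldots,u_{x^2-y}\}$, slide the center to $v_t$ while leaving the radius unchanged; balls already centered in $V(T)$ are kept in place. The resulting balls are centered on the spine of $T$, their radii remain $0,1,\ldots,x-1$ (hence distinct), and there are still $x$ of them.

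The only step requiring justification is that $\mathcal{C}$ still covers $V(T)$; this is a brief tree-distance calculation. For any $u_j$ in the appended tail and any $w\in V(T)$, the unique path in $T^+$ from $u_j$ to $w$ passes through $v_t$, so
\[
d_{T^+}(u_j,w)=d_{T^+}(u_j,v_t)+d_T(v_t,w).
\]
Thus $d_{T^+}(u_j,w)\le r$ implies $d_T(v_t,w)\le r$, and the replacement ball of radius $r$ at $v_t$ covers in $T$ at least every $T$-vertex that the original ball covered. Balls of $\mathcal{C}^+$ already centered in $V(T)$ cover the same $T$-vertices when interpreted in $T$, since $T$ is a subtree of $T^+$. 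Because $\mathcal{C}^+$ covered $V(T)\subseteq V(T^+)$, the collection $\mathcal{C}$ covers $V(T)$, giving $\burn(T)\le x=\lceil\sqrt{y}\,\rceil$. The main (and essentially only) potential obstacle is this distance check, and it is clean because $T^+$ is a tree.
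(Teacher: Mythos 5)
Your proof is correct and takes essentially the same route as the paper's (one-sentence) proof: extend the spine by a path to reach $x^2$ vertices, then convert the resulting cover with $x$ balls into a cover of $T$ by retracting each tail-centered ball to $v_t$, keeping all radii. One small quibble: the hypothesis only yields a cover of $T^+$ with distinct radii, not necessarily a \emph{spine} cover as you assert, but your argument never actually uses that the centers lie on the spine, so the conclusion $\burn(T)\le x=\lceil\sqrt{y}\,\rceil$ stands.
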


\noindent 
The observation holds because for any $p$-caterpillar $T$ on $y$ vertices, we can extend its spine to obtain $p$-caterpillar $T^*$ on $x^2$ many vertices, and then easily transform a cover of $T^*$ with $x$ many balls into a cover of $T$ with 
 $\lceil \sqrt{y}\ \rceil = x$ many balls.
% Following Observation~\ref{obs:perfectsquare}, we assume throughout this paper that for any $p$-caterpillar on $n$ vertices, $n$ is a perfect square.

We note for the proof below, and subsequent proofs, that if non-overlapping, the tiny balls occupy 
$1+3+\dots+(2p-1) = p^2$ spine vertices. 
The next result points out that the most difficult case is when the subtrees of the roots are simply paths (or close to paths);  once the the number of vertices in the subtrees is large enough relative to the tree heights, the BNC follows in a straightforward manner, just using shift operations (no jump operations).  Thus, a good picture to have in mind is that the hardest case is when the subtrees are all paths.

\begin{prop}\label{theprop} The BNC holds for any $p$-caterpillar such that $\displaystyle\sum_{i=1}^k (\nv_i-2\h_i) \geq p^2 + p$.  

\end{prop}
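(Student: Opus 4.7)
The plan is to use the cocoon construction from Section~3 with only shift operations (no jumps), as foreshadowed by the paragraph preceding the statement. By Observation~\ref{obs:perfectsquare}, I may reduce to the case $n = N^2$ with $N = \lceil \sqrt{n}\ \rceil$. Take the cocoon \cocoon{T_0, \ldots, T_k} of $T$ with wrapper \bigT\ whose spine has exactly $n$ vertices, and start with the special $T_0$-cover
$$\mathcal{C}_0 = \cover{B^{[p]}, B^{[p+1]}, \ldots, B^{[N-1]}, B^{[p-1]}, \ldots, B^{[0]}},$$
which covers the spine of \bigT\ exactly since $\sum_{r=0}^{N-1}(2r+1) = N^2 = n$. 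The initial excess is $\varepsilon_0 = \sum_i(\nv_i - \h_i)$, so the hypothesis rewrites as $\varepsilon_0 \geq p^2 + p + \sum_i \h_i$.

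The main step is an induction on $j = 0, 1, \ldots, k-1$. At step $j+1$, I locate the non-tiny ball $B$ of $\mathcal{C}_j$ containing $x_{j+1}$; such a ball exists because the invariant below keeps the tiny ball region pinned to the rightmost part of the covered region, strictly inside $\bigT - T_j$. If $x_{j+1}$ is already tree-covered in $B$, set $\mathcal{C}_{j+1} = \mathcal{C}_j$ and note that the excess grows by $\h_{j+1}$. If $x_{j+1}$ is left-bad in $B$, apply the Left Shift Lemma to $B$. If $x_{j+1}$ is right-bad in $B$, apply the Right Shift Lemma to the ball $B^*$ directly right of $B$. In each shift case, the output is a cover that is again special from the appropriate ball, so the specialness assumption propagates down the induction.

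The invariant I maintain is $\varepsilon_j \geq p^2 + p + \sum_{i > j}\h_i$, which holds at $j = 0$ by hypothesis and is preserved step by step: a Left Shift does not decrease the excess, while a Right Shift decreases it by at most $\h_{j+1}$, exactly matching the decrease in $\sum_{i > j}\h_i$. This invariant is designed so that both preconditions of the Right Shift Lemma hold automatically: the excess bound $\varepsilon_j \geq \h_{j+1}$ is immediate, and the distance condition follows because the leftmost tiny-ball vertex lies at least $\varepsilon_j - p^2 + 1$ positions to the right of the rightmost spine vertex of $T_j$, which is itself $\sum_{i > j}\h_i + (t - i_j)$ positions to the right of the rightmost root $x_j$ of $T_j$. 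After $k$ steps, $\mathcal{C}_k$ is a $T_k = T$ cover in \bigT\ using $N$ balls, which restricts to a cover of $T$ and yields $\burn(T) \leq \lceil \sqrt{n}\ \rceil$.

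The hard part will be the bookkeeping for the distance between the already-covered roots and the leftward-drifting tiny ball region at each step. The $p^2$ in the hypothesis absorbs the length of the tiny ball region itself, while the extra $+p$ supplies the spatial buffer needed to withstand the cumulative leftward shifts (each of size up to $2\h_{j+1}$) produced by right-bad steps; together with the maintained excess invariant and the strict inequality $i_j \leq t - 1$ separating each root from $T$'s right end, these constants are exactly enough to keep the cocoon process running uninterrupted to completion.
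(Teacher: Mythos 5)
Your proof matches the paper's almost line for line: same reduction to a perfect square via Observation~\ref{obs:perfectsquare}, same choice of special $T_0$-cover on the spine of $\bigT$, same rewriting of the hypothesis as $\varepsilon_0 \geq p^2 + p + \sum_i \h_i$, same three-way case split (tree-covered / left-bad / right-bad), the identical inductive invariant $\varepsilon_j \geq p^2 + p + \sum_{i>j}\h_i$, and the same accounting that a Right Shift costs at most $\h_{j+1}$ while a Left Shift costs nothing. The only stylistic difference is in the verification that the shift lemmas may legitimately be applied: the paper checks directly that the balls $B$ and $B^*$ are non-tiny (by counting vertices right of the current root against the $p^2 + p$ excess), whereas you track the position of the leftmost tiny-ball vertex relative to $T_j$'s rightmost spine vertex and the rightmost root; both bookkeeping strategies are making the same observation that the $p^2$ absorbs the tiny-ball footprint and the extra $+p$ is a buffer keeping roots away from it, so this is a presentational variation rather than a different argument.
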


\begin{proof} 

Suppose a $p$-caterpillar $T$ on $n$ vertices has cocoon \cocoon{T_0, \ldots, T_k}, where by Observation~\ref{obs:perfectsquare}  we can assume $n$ is a perfect square.  Begin with a special $T_0$-cover in \bigT \ which covers the entire spine of \bigT; since $n$ is a perfect square we can achieve the required non-overlapping condition.
Then by the assumption we start with initial excess
$$\varepsilon_0 = \sum_{i=1}^k (\nv_i - \h_i) \geq p^2 + p + \sum_{i=1}^k \h_i.$$  
Proceeding by induction, assume that for some $j \geq 0$, we have a $T_j$-cover in \bigT, call it $\mathcal{C}_j$, that is special beginning
at a ball containing $x_j$ (except for $j=0$, it is special from the leftmost ball),
and has excess   
$$\varepsilon_j  \geq p^2  + p + \sum_{i=j+1}^k \h_i.$$  

Now consider $x_{j+1}$, and we will turn $\mathcal{C}_j$ into a cover $\mathcal{C}_{j+1}$ of $T_{j+1}$.
If $x_{j+1}$ is root-covered by the $T_j$-cover in \bigT, we let $\mathcal{C}_{j+1} = \mathcal{C}_j$. 
If  $x_{j+1}$ is left-bad, we do an $\te$-shift for $\te \leq \h_{j+1}$ (using the Left Shift Lemma) to get $\mathcal{C}_{j+1}$.
If $x_{j+1}$ is right-bad we do an $\te$-shift  for $\te \leq 2\h_{j+1}$ (using the Right Shift Lemma) to get $\mathcal{C}_{j+1}$.
The next paragraph justifies the application of either Shift Lemma.
Observe that $\mathcal{C}_{j+1}$ is special beginning at the leftmost ball containing $x_{j+1}$, and in the worst case we get the required inductive conclusion:
$$\varepsilon_{j+1} \geq \varepsilon_j +\h_{j+1} - 2\h_{j+1} \ge p^2 + p + \sum_{i=j+2}^k \h_i.$$  

We now justify our applications of the shift lemmas.  
We use the fact that the excess is always at least $p^2 + p$.
The Left Shift Lemma requires the ball containing the root (i.e. $x_{j+1}$)
to be non-tiny.  Since the tiny balls occupy $p^2$ vertices, the excess being at least $p^2$ is sufficient to keep any root safely to the left of any tiny ball.
The Right Shift Lemma (applied to a right-bad root) makes the stronger requirement that ball right of the ball containing the root is non-tiny.  %The root is right-bad in some ball $B$, so it is at most distance $p-1$ to the right endpoint of $B$, so the ball right of $B$ must be non-tiny, otherwise there would be at most $p^2 + p - 1$ vertices right of the root, not enough to account for our excess of $p^2 + p$.   
In this case, the root is right-bad in some ball $B$, so it is at most distance $p-1$ from the right endpoint of $B$.  The ball right of $B$ must be non-tiny, otherwise there would be at most $p^2+p-1$ vertices right of the root, which is insufficient to account for our excess of $p^2+p$.

\end{proof}

In the last proposition we see that one of the key points is that the tiny balls remain on the right of the spine, not interacting with the roots.  We do this by keeping the excess up to at least $p^2+p$.
As a bigger point, we can see Remark~\ref{remark1} and Proposition~\ref{theprop} highlighting two ways the BNC becomes straightforward.
Remark~\ref{remark1} points out that if there are enough non-spine vertices, the BNC is easy.
Proposition~\ref{theprop} points out that if the subtrees at the roots are sufficiently dense, then the BNC follows.  

Section~\ref{sec:pcats} considers the harder case where we do not have enough non-spine vertices, and the subtrees are sparse.
Given the complication caused by the tiny balls, it is tempting to dispense with them, and just add one single large ball.  However, then our main result would not prove the BNC, since the tiny balls are sometimes required in order to start the process off with a cover of the spine of \bigT.

\section{The main result: $p$-caterpillars}\label{sec:pcats}

%\me{Moved here on Aug 22:}
%\textcolor{orange}{We will use the lemma in Section~\ref{sec:pcats}, where  %In particular, we prove the BNC holds for sufficiently large strict $p$-caterpillars in Section~\ref{sec:strict} where the conclusion of the lemma simplifies to $\varepsilon_{j+z} \geq \varepsilon_j+\sum_{i=j+1}^{z+j} \nv_i = \varepsilon_j+zp$ since $\nv_i = h_i = p$ for all $i$.    
%we prove the BNC holds for sufficiently large $p$-caterpillars; and in this case, it may be that $\nv_i = h_i = 1$ for some $i$.  As a result, the conclusion of the lemma will simplify to $\varepsilon_{j+z} \geq \varepsilon_j+\sum_{i=j+1}^{z+j} \nv_i \geq  \varepsilon_j+p$.} 

In this section, we prove the burning number conjecture holds for all $p$-caterpillars with a sufficient number of vertices. We begin with a technical lemma and a rough overview of the proof of Theorem~\ref{thm:catp}, before proceeding with the actual proof of Theorem~\ref{thm:catp}.

 At various places in this section we will refer to the left or right half of a path, and say that some
balls are entirely contained in one of the halves - by this we mean that all of the vertices in such a ball are contained in that half (not just its center or some of its vertices).
In the next technical lemma 
 we lower bound the number of balls in the right half of a path.  

\begin{lemma} \label{lem_ball_calc}
For positive integers $z$ and $p$, if $n > 16z^2p^2$ and $n$ is a perfect square, then any special cover of an $n$ vertex path has at least $zp$ non-tiny  
balls entirely contained in the $\lfloor n/2 \rfloor$ rightmost vertices. 
\end{lemma}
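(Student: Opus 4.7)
The plan is to lower-bound $R$, the number of non-tiny balls entirely in the rightmost $\lfloor n/2\rfloor$ vertices, in two stages: first force a large total \emph{size} of such balls to live in the right half, then convert that total size into a count by exploiting distinctness of radii.  I interpret the lemma as applying to the setting used throughout Section~\ref{sec:pcats}, namely a special cover consisting of the balls of radii $0,1,\ldots,N-1$ with $N=\sqrt{n}$ (this is the cover $\mathcal{C}_0$ used to start the proof of Theorem~\ref{thm:catp}, and since a cover of an $n$-vertex path by $N$ distinct non-overlapping balls forces the radii to be $\{0,1,\ldots,N-1\}$, the shift and jump operations preserve this multiset).

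Because the cover is non-overlapping, the ball sizes sum to exactly $n$.  Let $S$ be the set of radii of non-tiny balls entirely in the rightmost $\lfloor n/2\rfloor$ vertices, set $M=\sum_{r\in S}(2r+1)$, and $R=|S|$.  The right half is covered by (i) the balls indexed by $S$, contributing $M$; (ii) tiny balls entirely in the right half, contributing at most $p^2$ (their total size); and (iii) at most one ball that straddles the midpoint, contributing at most $2N-2$ vertices (its radius is at most $N-1$ and it must leave at least one vertex in the left half).  Combining these, $M \geq \lfloor n/2\rfloor - p^2 - 2N + 2$.

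To convert the size bound into a count bound, note that since the radii in $S$ are distinct elements of $\{p,p+1,\ldots,N-1\}$, taking the $R$ largest such radii maximizes $M$ and gives $M \leq \sum_{r=N-R}^{N-1}(2r+1) = 2NR - R^2$.  Chaining the two estimates and using $N^2-\lfloor n/2\rfloor=\lceil n/2\rceil$ yields the quadratic inequality $R^2 - 2NR + (\lfloor n/2\rfloor - p^2 - 2N + 2)\leq 0$, whose smaller-root bound gives
\[
R \ \geq\ N - \sqrt{\lceil n/2\rceil + p^2 + 2N - 2}.
\]

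It remains to show this implies $R\geq zp$ under the hypothesis $n>16z^2p^2$.  The hypothesis together with $n$ a perfect square forces the integer $N=\sqrt n$ to satisfy $N\geq 4zp+1$.  Since $R$ is an integer, it suffices to show $R>zp-1$, i.e.\
\[
f(N) \ :=\ (N-zp+1)^2 - \lceil n/2\rceil - p^2 - 2N + 2 \ >\ 0.
\]
At $N=4zp+1$ the value $n=(4zp+1)^2$ is odd, so $\lceil n/2\rceil=(n+1)/2$, and direct computation gives $f(4zp+1)=p^2(z^2-1)+3 \geq 3 > 0$.  A one-step finite-difference check shows $f(N+1)-f(N)\geq N-2zp>0$ for every integer $N\geq 2zp+1$ (the only care needed is that $\lceil(N+1)^2/2\rceil-\lceil N^2/2\rceil$ equals $N$ or $N+1$ depending on parity), so $f$ stays strictly positive throughout $N\geq 4zp+1$.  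The main subtlety is that the threshold $16z^2p^2$ is essentially tight --- the real-valued bound on $R$ at $N=4zp+1$ is strictly less than $zp$ in the worst regimes --- and the conclusion genuinely relies on rounding $R$ up to the next integer.
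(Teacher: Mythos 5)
Your proposal is correct and follows essentially the same counting argument as the paper: the rightmost $\lfloor n/2\rfloor$ vertices, minus at most $p^2$ tiny-ball vertices (and, in your version, an explicit allowance of at most $2N-2$ vertices for one straddling ball), must be covered by non-tiny balls of distinct radii at most $N-1$, and the maximum possible sizes of the largest such balls force at least $zp$ of them to lie entirely in the right half. The paper finishes by verifying the single sufficient inequality $\lfloor N^2/2\rfloor - p^2 \ge 2zpN - z^2p^2$ directly from $N \ge 4zp+1$, whereas you solve a quadratic in the ball count and add an integrality-plus-induction step; both work, and both rest on the same assumption (explicit in your write-up, tacit in the paper's phrase ``the largest ball has radius $N-1$'') that the special cover in question uses radii $0,1,\ldots,N-1$.
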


\begin{proof}
Let $N = \sqrt{n} \geq 4zp+1$.
Since the cover is non-overlapping and the largest ball has radius $N-1$, the number of vertices covered by the largest $zp$ balls is 

$$\sum_{i=N-zp}^{N-1}(2i+1) = N^2 - (N-zp)^2 = 2zpN-z^2p^2.$$

\noindent
There are $\lfloor n/2\rfloor -p^2 = \lfloor N^2/2\rfloor - p^2$ vertices among the rightmost $\lfloor n/2\rfloor $ vertices of the path that must be covered by non-tiny balls.  
Thus, to guarantee at least $zp$ non-tiny balls in the right half of the path, it suffices to show $\lfloor N^2/2\rfloor - p^2 \ge 2zpN - z^2p^2$, 
which follows from
$N(N-4zp)+2p^2(z^2-1) - 1 \ge 0$ (the ``$-1$'' arises in the case where $n$ is odd).  The inequality holds because $z \geq 1$ and $N \geq 4zp+1$.\end{proof}

We now describe the rough approach to the proof of Theorem~\ref{thm:catp}.  
The Jump Lemma is initially applied to cover enough roots on the left side of the path $T_0$ so that the excess is sufficiently large.  Then, if there are few roots (Case 1), we repeatedly apply the shift lemmas to cover the rest of the roots.  If there are many roots (Case 2),
then we carry out a process which successively tree-covers the roots, from left to right.  If a root is left-bad, we can apply the Left Shift Lemma, and this action does not decrease the  excess.
If a root is right-bad, simply applying the Right Shift lemma could reduce the excess, so instead we execute ``$p$ jumps'' (i.e. apply the Jump Lemma with $z = p$) and then one shift.  At the end, the excess is sufficiently large to ensure we can shift to tree cover the final few roots.
We now formally prove Theorem~\ref{thm:catp}.

\bigskip
\noindent {\bf Theorem~\ref{thm:catp}.} {\it The BNC holds for any $p$-caterpillar on at least $16(4p^3 + 2p^2 + 4p)^2p^2$ vertices.}

%\me{I think we can just point out (like we do in 5.2), that a bit of algebra shows that 2. implies 1. when $p>1$ (and not include the algebra).  However, here it is, just so we have it for ourselves:} 

%\textcolor{red}{Let $x = 2(4p^3+2p^2+4p)$. Observe that 1.\ can be expressed as $$\frac{n-x}{2x} \geq \sqrt{n} + p^2 ~\Longleftrightarrow~ n-x \geq 2x\sqrt{n}+2xp^2 ~\Longleftrightarrow~ n-2x\sqrt{n} \geq 2xp^2+x ~\Longleftrightarrow~ \sqrt{n}(\sqrt{n}-2x) \geq (2p^2+1)x.$$}

%\textcolor{red}{Let $n \geq 4x^2p^2$ and assume $p>1$.  Then $\sqrt{n} \geq 2xp$ and $$\sqrt{n}(\sqrt{n}-2x) \geq 2xp(2xp-2x) = 4x^2p(p-1) \geq 4x^2p = (32p^4+16p^3+32p^2)x \geq (2p^2+1)x.$$  Therefore, 1.\ holds when 2.\ holds and $p>1$.}

%\kerry{I definitely think we should put (some of) the algebra in - took us a few hours ... I put it in below so we can see how it fits - can modify, remove, etc ...}

\begin{proof}

We assume $p > 1$ as it is already known \cite{cat2019}, \cite{Liu} that the BNC holds for caterpillars (i.e. 1-caterpillars).
Let $T$ be a $p$-caterpillar on $n \ge 16(4p^3 + 2p^2 + 4p)^2p^2$ vertices. 
By Observation~\ref{obs:perfectsquare}, we can assume $n$ is a perfect square. 
We have chosen $n$ to satisfy two key inequalities which we call
Inequality~\ref{ineqGiven} and Inequality~\ref{ineqCalc}.

\begin{enumerate}

\item \label{ineqGiven}
$n \ge 16(4p^3 + 2p^2 + 4p)^2p^2$ 

\item  \label{ineqCalc}
$\dfrac{n - 2(4p^3 + 2p^2 + 4p)}{4(4p^3 + 2p^2 + 4p)} \ge \sqrt{n} + p^2$

\end{enumerate}
Inequality~\ref{ineqGiven} holds by assumption.
To see why Inequality~\ref{ineqCalc} holds, 
let $x = 2(4p^3+2p^2+4p)$ and observe that Inequality~\ref{ineqCalc} can be expressed as 
$$\frac{n-x}{2x} \geq \sqrt{n} + p^2 
~\Longleftrightarrow~ \sqrt{n}(\sqrt{n}-2x) \geq (2p^2+1)x.$$
\noindent
We are given that $n \geq 4x^2p^2$, so $\sqrt{n} \geq 2xp$, so since $p>1$ we have: 
$$\sqrt{n}(\sqrt{n}-2x) \geq 2xp(2xp-2x) = 4x^2p(p-1) \geq 4x^2p = (32p^4+16p^3+32p^2)x \geq (2p^2+1)x.$$  Therefore Inequality~\ref{ineqCalc} holds. 

Suppose $T$ has $r$ roots and let  \cocoon{T_0, \ldots, T_r} be a cocoon of $T$.  Let $\mathcal{C}_0$ be a special $T_0$-cover in \bigT\ which covers the entire spine of \bigT, where the tiny ball region is at the right.  
We proceed by cases based on  $r$.
In both cases we describe a process which begins with a number of jumps and ends with a number of shifts. It will be essential that we never shift at a tiny ball (required by the shift lemmas, since the tiny balls cannot  always tree-cover a root). At the end of each case, we show that the excess is sufficient to avoid the tiny balls.  

\noindent
{\bf Case 1: {\boldmath $T$} has {\boldmath $r < 4p^3 + 2p^2 + 4p$} roots. }

\noindent
We can assume 
there is some $b<r$ so that
$x_1, \ldots, x_b$ are among the leftmost $\lceil n/2\rceil $ vertices on the spine of \bigT\ and
$\sum_{i=1}^b \h_i \ge \sum_{i=b+1}^r \h_i$, i.e. think of the first half as
``heavier''  
(if the second half is heavier, we can just reverse the meaning of left and right in $T$).
Using Inequality~\ref{ineqGiven} with $z = b < 4p^3 + 2p^2 + 4p$, we apply Lemma~\ref{lem_ball_calc} in order to conclude that
there are at least $p b$ non-tiny balls entirely among the rightmost $\lfloor n/2 \rfloor$ vertices of the spine of \bigT.  
Thus, we can apply the Jump Lemma (with $z = b$ and $j = 0$) to
the first  $b$ roots of $T$, to conclude there is a rearrangement of the balls that results in a
$T_b$-cover in \bigT, called $\mathcal{C}_b$, with excess 
$\varepsilon_b \ge \sum_{i=1}^b \h_i$.  Furthermore, the Jump Lemma ensures $\mathcal{C}_b$ is special from the ball directly right of the ball containing $x_b$.   
At this point, the idea is to repeatedly shift in order to tree-cover the rest of the roots.  However we have to take some care because  if there are roots close to the right end of the spine, they would be among the tiny balls, where the shift lemmas are invalid; we will get around this by moving all the small balls to a region sparse in roots.

The right half has fewer than $r$ roots that break up the right half of the spine into root-free intervals (i.e. maximal sets of consecutive non-root spine vertices).  
Since we have 
at least $\frac{n}{2} - r = \frac{n - 2r}{2}$ non-roots among these, we must have some root-free interval of size at 
least
$$I =  \dfrac{\frac{n-2r}{2}}{r} = \dfrac{n-2r}{2r} \ge 2\sqrt{n} + 2p^2.$$  The last inequality holds by Inequality~\ref{ineqCalc} using $r \leq 4p^3 + 2p^2 + 4p$. 
%where we point out why the last inequality holds, by calculating as follows (note that the initial inequality below
%is just Inequality~\ref{ineqCalc} with the worst case of $(6p + 3)$ replaced by $r \le 6p + 3$).
%\begin{flalign*} 
%  n - 2r\sqrt{n} & \ge 2rp^2 + r \\
%  n - r & \ge 2r\sqrt{n} +2p^2r \\
%  \dfrac{n - r}{r} & \ge 2 \sqrt{n} + 2p^2  
%\end{flalign*}

We break up this interval into its first half of $\lceil I/2 \rceil$ vertices and second half of $\lfloor I/2 \rfloor$ vertices.
If there is a ball that contains vertices from both halves, call it $B$.  Even if $B$ were the largest ball, it occupies at most
$\sqrt{n}$ vertices in the second half.  Let $v$ be the vertex immediately right of $B$, or simply the leftmost vertex of the second
half if there is no $B$.  In either case, $v$ is the leftmost vertex in some ball in the cover.  Move all the tiny balls to start at $v$ and shift the balls at $v$ or right, to the right.   Since the tiny balls only cover $p^2$ vertices, 
 and 
$\lfloor I/2 \rfloor \ge \sqrt{n} + p^2$, we can fit all the tiny balls into the right half of this interval. 

In the current situation we have covered roots $x_1, \ldots, x_b$.  Suppose $x_b$ is in ball $B$ and $R$ is the ball immediately right of $B$.  The cover is special from ball $R$.  We will now carry out shifting in order to tree-cover the roots in ball $B$ (we will repeat a similar process at two later points in this proof, referring to this process as \dword{clearing the roots}).
First, for any roots that are left bad in $B$, by the Weak Left Shift Lemma, we can shift at $B$ so that none of these are left bad in $B$; this will not decrease the excess.  If there are now no left-bad or right-bad vertices in $B$, then the cover is special starting at $R$ as desired.
Second, if any of 
 roots are 
 right-bad in $B$ then we shift at $R$ till $R$ contains them all, and none of these roots remain left-bad in $R$ (this is possible since all these roots are within distance $p$ of the right endpoint of $B$).  Such shifting does not decrease the excess, so the worst case is actually if the only root in $B$ is $x_{b + 1}$ and it is right-bad in $B$.  In this case, we apply the Right Shift Lemma to $x_{b + 1}$; that is, we do an
 $\te$-shift for some $\te \leq 2h_{b+1}$.  In this worst case, this results in a $T_{b + 1}$-cover in \bigT, called 
 $\mathcal{C}_{b + 1}$, with excess 
$\varepsilon_{b + 1} \geq \varepsilon_{b} - h_{p+1}$
and which is special starting at the ball containing $x_{b + 1}$.

We next repeatedly apply the Left Shift Lemma or Right Shift Lemma, doing a shift for each of the roots in the right half.  This results in a $T_r$-cover in \bigT \ with excess: 
$$\varepsilon_r \geq  
\varepsilon_b - \sum_{i=b + 1}^{r} \h_i  \geq
\sum_{i=1}^b \h_i - 
\sum_{i=b + 1}^{r} \h_i \ge 0.$$

In order for the prior shifting to work, it is essential that we never shifted at a tiny ball. Each shift is at most a $2p$-shift, so in total we shift at most 
$(4p^3 + 2p^2 + 4p)2p \le \sqrt{n},$ where the last inequality follows from Inequality~\ref{ineqGiven}.  Since any tiny ball is at least $\sqrt{n}$ to the right of any root, we never in fact shift at a tiny ball.  
\medskip

\noindent
{\bf Case 2: {\boldmath $T$} has {\boldmath $r \ge 4p^3 + 2p^2 + 4p$} roots.} 

\noindent
 In order to simplify some discussion, let $\Q = 2p^3 + p^2 + 2p$.
Since the spine of \bigT \ is a path on $n$ vertices, without loss of generality, we can assume there are
$\lceil r/2\rceil \ge \Q$ roots among the first $\lceil n/2 \rceil$ vertices of 
the spine of
\bigT, 
(if the first half has fewer roots than the second half, just reverse the meaning of left and right).
 During the initial phase, we will apply the Jump Lemma to the first 
 $\Q$ roots $x_1,\dots,x_{\Q}$, and the Right Shift Lemma to 
 $x_{\Q + 1}$.  
 Then we will proceed by a process of combining jumps and shifts for the roots until we get low on balls, at which point we will shift everything remaining; we now give the details.

 We begin the initial phase.
We apply
Lemma~\ref{lem_ball_calc} with $z = \Q$ (using above Inequality~\ref{ineqGiven})
to conclude that we have at least
 $\Q p$ non-tiny balls entirely in the rightmost $\lfloor n/2 \rfloor$ vertices of 
 the spine of \bigT.
Since we have at least $\Q p$ non-tiny balls to the right of the ball containing $x_{\Q}$, 
we  can apply the Jump Lemma with $z=\Q$ and $j=0$.
  Thus, there is a rearrangement of the balls so that the result is a $T_{\Q}$-cover in \bigT, called $\mathcal{C}_{\Q}$, with excess 
  $\varepsilon_{\Q} \ge \Q$ 
  (since in the worst case we start with excess 0, and 
  the $p$ jumps happen on roots whose trees are simply single edges, i.e. the 
  $\nv_i = \h_i  = 1$).

 Suppose  $x_{\Q}$ is now in ball $B$ and $R$ is the ball directly right of $B$;
 the Jump Lemma ensures $\mathcal{C}_{\Q}$ is special from $R$.  We want to start the inductive process at $R$, but first need to deal with other roots that might be in $B$ (i.e. from among $x_{\Q + 1}, x_{\Q + 2}\ldots)$. 
 We deal with these
as we did above when we ``cleared the roots.'' Like above we consider the worst case where the next root $x_{\Q+1}$ is the only root we cover, and requires the Right Shift Lemma. So we do an $\te$-shift for some $\te \leq 2p$.  This results in a $T_{\Q + 1}$-cover in \bigT, called 
 $\mathcal{C}_{\Q + 1}$, with excess 
$\varepsilon_{\Q + 1} \geq \varepsilon_{\Q} - p \ge \Q - p$
and which is special starting at the ball containing $x_{\Q + 1}$.

The initial phase is now complete.
The key point is that the excess is large enough, so that at the end, 
when we do multiple shifts, the tiny balls will not be involved in the shifts (details below).
We start the inductive process at $j = \Q + 1$ (i.e. assuming the worst case start).  Suppose we have a $T_j$-cover in \bigT, called $\mathcal{C}_j$, with excess at least 
$\Q - p$, which is special starting at the ball containing $x_j$.  
We have cases.

If $x_{j+1}$ is left-bad in its ball then we apply the Left Shift Lemma.  That is, we do an $\te$-shift for $\te \leq \h_{j+1}$ 
to obtain a $T_{j+1}$-cover $\mathcal{C}_{j+1}$ in \bigT\ with excess 
$\varepsilon_{j+1} \geq \varepsilon_j \geq \Q - p$.  By the Left Shift Lemma, the $T_{j+1}$-cover in \bigT \ is special, starting at the ball containing $x_{j+1}$.
In the easy case that $x_{j+1}$ is not bad (neither left nor right), just let $\mathcal{C}_{j+1}=\mathcal{C}_j$, so that
it is a $T_{j+1}$-cover in \bigT\ with excess $\varepsilon_{j+1} \geq \varepsilon_j \geq \Q - p$.

If $x_{j+1}$ is right-bad in ball $B$ then we apply the Jump Lemma (with $z=p$), followed by the Right Shift Lemma.  We use $z = p$ because in the worst case, the next $p$ roots are roots of single edges.   To apply the Jump Lemma, there must be at least $zp = p^2$ balls to the right of $x_{j+p}$;  for now, we assume we have enough balls.  Applying the Jump Lemma results in a $T_{j+p}$-cover in \bigT\ where $x_{j+p}$ is now in ball $B^*$. Note that the $T_{j+p}$-cover in \bigT\ has excess $\varepsilon_{j+p} =\varepsilon_j + p \geq \Q - p +p = \Q$ and is special starting at the ball directly to the right of the ball $B^*$ containing $x_{j+p}$. 
At this point we may have a number of roots in $B^*$.  We deal with these
as we did above when we ``cleared the roots.'' Like above we consider the worst case where the next root $x_{j+p+1}$ is the only one to deal with, and requires the Right Shift Lemma.
Observe that $x_{j+p+1}$ could be right-bad in $B^*$ or a ball further to the right; or could be left-bad in a ball to the right of $B^*$.  
We apply the appropriate shift lemma in order to do an $\te$-shift for $\te \leq 2p$ resulting in a $T_{j+p+1}$-cover in \bigT\ with excess $\varepsilon_{j+p+1} \geq \varepsilon_{j+p} -p \geq \Q - p$.  By the shift lemmas, the $T_{j+p+1}$-cover in \bigT\ is special, starting at the ball containing $x_{j+p+1}$.\medskip 

Now consider the case above when the root is right-bad, but we have less than $p^2$ balls to the right.  
That is, 
for some $j$, we have a $T_j$-cover in \bigT,  with excess 
$\varepsilon_j \geq \Q - p$ 
which is special starting at the ball containing $x_j$.  Furthermore, suppose the number of (non-tiny) balls to the right of the $B$ (the ball containing $x_{j+1}$) is fewer than $p^2$.  We can use $2p$-shifts to cover the vertices that are right or left-bad in each remaining ball (including $B$, these right-bad vertices are in at most $p^2$ balls) in order to tree-cover all remaining roots, leaving excess at least 
$$\Q - p -2p(p^2) = 2p^3 + p^2 + 2p - p - 2p^3 = p^2 + p.$$  

\noindent
This excess guarantees that in our shifting we never shift at a tiny ball (which would be problematic).  When we shift we look at the ball the current root is in, or consider the ball immediately to its right. The tiny balls occupy $p^2$ vertices, so they are all right of any current root.  The excess of $p^2 + p$ (i.e. with an extra $p$) also ensures that the ball immediately to the right of any current root is not tiny.

\end{proof}

\medskip

Although Theorem~\ref{thm:catp} proves that the BNC holds for sufficiently large $p$-caterpillars, we expect that our bound on $n$ could be significantly improved. Theorem~\ref{thm:catp} proves there is a spine cover for any sufficiently large $p$-caterpillar, which is overkill for proving the BNC.  We also note that in many cases, the number of balls required for the Jump Lemma is excessive.

\begin{quest} Can the requirement on the number of vertices of a $p$-caterpillar from Theorem~\ref{thm:catp} be reduced? 
\end{quest} 

The BNC holds for $1$-caterpillars and $2$-caterpillars with no restrictions on $n$.  We conjecture that the shift and jump operations could be used to prove the BNC holds for $p$-caterpillars for other small values of $p$, such as $p=3$ or $p=4$.

\begin{quest} Can our proof method be modified in order to prove the full BNC for $p$-caterpillars, for some small values of $p$?
\end{quest}

\end{document}